\theoremstyle{plain}
\newtheorem{thm}{Theorem}[section]
\newtheorem{thm*}{Theorem}[section]
\newtheorem{cor}[thm]{Corollary}
\newtheorem{prop}[thm]{Proposition}
\newtheorem{lemma}[thm]{Lemma}
\newtheorem{lemma*}{Lemma}
\newtheorem{question}[thm]{Question}
\theoremstyle{definition}
\newtheorem{defn}[thm]{Definition}
\newtheorem{remark}[thm]{Remark}
\newtheorem{note}[thm]{Notation}
\newtheorem*{remark*}{Remark}
\newtheorem{ex}[thm]{Example}
\numberwithin{equation}{thm}
\newcommand{\cN}{\mathcal N}
\newcommand{\bG}{\mathbb G}
\newcommand{\bA}{\mathbb A}
\newcommand{\cP}{\mathcal P}
\newcommand{\bZ}{\mathbb Z}
\newcommand{\bF}{\mathbb F}
\newcommand{\cC}{\mathcal C}
\newcommand{\cS}{\mathcal S}
\newcommand{\cE}{\mathcal E}
\newcommand{\fe}{\mathfrak e}
\newcommand{\fg}{\mathfrak g}
\newcommand{\fu}{\mathfrak u}
\newcommand{\gl} {\mathfrak {gl}}
\newcommand{\fv}{\mathfrak v}
\newcommand{\ol}{\overline}
\newcommand{\ul}{\underline}
\def\sl2{\operatorname{SL_{2(2)}}\nolimits}
\def\Ga2{\operatorname{\mathbb G_{\rm a(2)}}\nolimits}
\newcommand{\bu}{\bullet}
\date\today
\begin{document}

 \title[Cohomology of Unipotent Group Schemes]{Cohomology of Unipotent Group Schemes}
 
 \author[ Eric M. Friedlander]
{Eric M. Friedlander$^{*}$} 

\address {Department of Mathematics, University of Southern California,
Los Angeles, CA}
\email{ericmf@usc.edu}

\thanks{$^{*}$ partially supported by the Simons Foundation}

\subjclass[2010]{20G05, 20C20, 20G10}

\keywords{rational cohomology, Frobenius kernels, unipotent algebraic groups}

\begin{abstract} 
We verify that universal classes in the cohomology of $GL_N$ determine explicit 
cohomology classes of Frobenius kernels $G_{(r)}$ of various linear algebraic groups $G$ .  
We  consider the relationship of $\varprojlim_r H^*(U_{(r)},k)$ to the rational cohomology 
$H^*(U,k)$ of many unipotent algebraic groups $U$.  The second half of this
paper investigates in detail the cohomology of Frobenius kernels $(U_3)_{(r)}$
of the Heisenberg group $U_3 \subset GL_3$.
\end{abstract}

\maketitle

\tableofcontents

\section{Introduction}

We consider linear algebraic groups $G$ defined over a field of characteristic $p > 0$ and their
Frobenius kernels $G_{(r)}$.   We investigate the rational cohomology algebra $H^*(G,k)$ of $G$ and
the cohomology algebra $H^*(G_{(r)},k)$. Our results are of two types.  The
first two sections are of a general nature, applying to a wide class of unipotent groups.  The next two
sections provide more detailed information for $H^*((U_3)_{(r)},k)$, where $U_3 \subset GL_3$ is
the Heisenberg group.

The cohomology of groups has played an important role in various aspects of topology, number theory,
and algebraic geometry.  Our initial interest was generated by the foundational work of D. Quillen \cite{Q1}, \cite{Q2}
and the connections with algebraic K-theory as also developed by Quillen \cite{Q3}.  Subsequently, thanks
to the work of many mathematicians beginning with J. Alperin - L. Evens \cite{AE}, J. Carlson \cite{Ca},
and E. Cline - B. Parshall - L. Scott  \cite{CPS}, cohomology of groups has evolved into a useful
tool (``support varieties") for the study of representations of finite group schemes.  We would be amiss not to mention
work of the author with B. Parshall (e.g., \cite{FPar1}), A. Suslin (e.g., \cite{FS}), and J. Pevtsova 
(e.g., \cite{FP1}).  

Considerable progress has been made in computing the cohomology of infinitesimal group schemes of height 1, beginning with
work of Friedlander-Parshall \cite{FPar1}; this was extended by C. Drupieski - D. Nakano - N. Ngo \cite{DNN} 
to a determination of the cohomology algebra  $H^*((U_J)_1,k)$ for unipotent radicals $U_J$ of parabolic subgroups 
of reductive groups (as considered in this paper) for sufficiently large primes $p$, and further 
investigated by J. Carlson - D. Nakano \cite{CN} for small primes.  The focus of this paper is the challenge 
of achieving explicit computations of the cohomology of infinitesimal group schemes of height $> 1$.

The work
of A. Suslin, C. Bendel, and the author \cite{SFB1}, \cite{SFB2} gives a qualitative description of $H^*(G_{(r)},k)$
for the cohomology of Frobenius kernels of any linear algebraic group $G$.   A key step in this description for $G$
utilizes universal classes for $GL_N$ constructed in \cite{FS} leading to a map $\ol \phi_{GL_N}: k[V_r(GL_N)] \to
H^*((GL_N)_{(r)},k)$, complementing in the special case of $G = GL_N$, the canonical map $\psi: H^*(G_{(r)},k) \to k[V_r(G)])$; 
here, $k[V_r(G)]$ is the coordinate algebra of the variety of 1-parameter subgroups
$\bG_{a(r)} \to G$.   

In Section \ref{sec:phiU}, we extend this construction to various linear algebraic groups
$G$ with an embedding $G \to GL_N$ of exponential type.  In particular, Theorem \ref{thm:coh-exist} establishes
a map $\phi_{G,r}$ from an explicit symmetric algebra determined by $\fg = Lie(G)$ to $H^*(G_{(r)},k)$ whose image 
is highly non-trivial.  For $U_J$ the unipotent radical of a parabolic subgroup of $GL_N$, Proposition \ref{prop:coh-UN} 
shows that $\phi_{U_J,r}$ determines the map $\ol \phi_{U_J,r}$ satisfying the properties established for 
$\ol \phi_{GL_N,r}$ in \cite{SFB1}, \cite{SFB2}.

The author's original motivation for the study of $H^*(U_{(r)},k)$ was to investigate the feasibility of a cohomology-based
theory of support varieties for a unipotent algebraic group $U$ complementing his theory using 1-parameter subgroups \cite{F1}
for much more general linear algebraic groups.  One reason for restricting our attention to unipotent algebraic groups
 is that $H^*(G,k)$ is trivial for many linear algebraic groups (for example, if $G$ is simple) but is never trivial if $G$ 
 is unipotent.
Our computations show such a cohomology-based theory even for unipotent linear algebraic groups is unlikely; see, for example, 
Corollary \ref{trivial-limit} in conjunction with Theorem \ref{thm:U3r}.

We present in Section \ref{sec:stable} various results concerning the limiting behavior of the cohomology of
Frobenius kernels $U_{(r)}$ of a unipotent algebraic group $U$ as $r$ increases.  In particular, Theorem
\ref{thm:inj} established that the inverse limit with respect to $r$ of such Frobenius kernels is additively 
isomorphic to the rational cohomology of $U$; this verification is the first of several occasions in this paper
where we utilize the Andersen-Jantzen spectral sequence \cite{AJ} for the cohomology of a  connected group scheme.
The results of this Section \ref{sec:stable} formulate the general principal that the explicit cohomology classes 
considered in other sections of this paper ``vanish in the limit."

A second motivation for our calculations is internal, within the general framework of 
cohomology of groups.   Let $G$ be a simple algebraic group, $U_J \subset P_J \subset G$ the unipotent
radical of a parabolic subgroup, and $\{ \Gamma_v, v \geq 1 \}$ the descending central series of $U_J$.
This descending central series is well described by H. Azad, M. Barry, and G. Seitz in \cite{ABS}.
Using the Lyndon-Hochschild-Serre spectral sequence \cite{HS}
for the central extension $1 \to \Gamma_2/\Gamma_3 \to U_J \to U_J/\Gamma_2 \to 1$,
we construct in Definition \ref{defn:eta} the map of graded $k$-algebras
$$\eta_{U_J/\Gamma_3}: S^*((U_J/\Gamma_3)_{(r)}) \ \to \ H^*((U_J/\Gamma_3)_{(r)},k)$$
where $S^*((U_J/\Gamma_3)_{(r)})$ is a polynomial algebra with generating subspaces  \\
$\oplus_{\ell=0}^{r-1} (\fu_J/\gamma_2)^{\#(\ell+1)}[2]$ (in cohomology degree 2,  
where the Frobenius twist $(-)^{(\ell+1)}$ indicates the torus action) and 
$\oplus_{\ell=0}^{r-1} (\gamma_2/\gamma_3)^{\#(r)}[2p^{r-\ell-1}]$.
The subtlety here is the existence of the choice of an intrinsic map $\eta_{U_J/\Gamma_3}$.  
This is established with the help of the  Andersen-Jantzen spectral sequence.

Proposition \ref{prop:oleta3} verifies that $\eta_{U_J/\Gamma_3}$
factors through $\ol \eta_{U_J/\Gamma_3,r}: \ol S^*((U_J/\Gamma_3)_{(r)}) \to H^*((U_J/\Gamma_3)_{(r)},k)$,
reflecting the kernel of the inflation map $H^*((U_J/\Gamma_2)_{(r)},k) \to H^*((U_J/\Gamma_3)_{(r)},k)$; this kernel 
has explicit generators given by (\ref{reln:S2}) arising from a non-zero differential in the spectral sequence.
A key ingredient in this construction is the action of the Steenrod algebra on 
the LHS spectral sequence which enables us to identify permanent cycles. 

We view the map $\ol \eta_{U_J/\Gamma_3,r}$ as a good ``explicit" model for
$H^*((U_J/\Gamma_3)_{(r)},k)$.   As summarized in Theorem \ref{thm:U3r},  
$\ol \eta_{U_3,r} \ = \ \ol \phi_{U_3,r}:  \ol S^*((U_3)_{(r)}) \to H^*((U_3)_{(r)},k)$ 
is a map from an integrally closed domain with known generators and relations 
to $H^*((U_3)_{(r)},k)$ which is a.) injective, b.) surjective onto $p$-th powers, and 
c.) has associated graded map $gr(\ol \eta_{U,r})$ which is both injective and surjective 
onto $p$-th powers.  

We anticipate that similar arguments should apply to $H^*((U_J/\Gamma_{v+1})_{(r)},k)$ for any $v \geq 2$ 
once a suitable action of the Steenrod algebra on the Andersen-Jantzen spectral sequence \cite{AJ} is established.  
This is only one of the many challenges left unanswered in the present paper, and appears as Question 3 in the list of 
seven questions given in Section \ref{sec:questions}.

In what follows, $k$ denotes an algebraically closed field  of characteristic $p > 2$.  
We denote by $H^*(G,k)$ the (rational, or Hochschild) cohomology of an affine group scheme 
$G$ over $k$ and by $H^\bu(G,k) \ \subset \ H^*(G,k)$ the commutative subalgebra of cohomology 
classes of even degree.  We use $V^\#$ to denote the $k$-linear dual of a $k$-vector space $V$.  Other 
than in Section \ref{sec:stable} where we
consider the effect of increasing $r$, we fix an arbitrary positive integer $r$.

We thank Robert Guralnick for helpful discussions.  We especially express our gratitude and admiration
to the patient referee for detailed and constructive corrections.


\section{1-parameter subgroups, exponential type, and cohomology}
\label{sec:phiU}

In this section, we extend the formulation of the map of $k$-algebras
\begin{equation}
\label{def:phi-GL}
\phi_{GL_N,r}: S^*(\oplus_{\ell = 0}^{r-1} (\gl_N^{\#(r)}[2p^{r-\ell-1}]) ) \ \to \ H^\bu(GL_{N(r)},k)
\end{equation}
given by A. Suslin and the author in \cite{FS} to the Frobenius kernels $U_{(r)}$ of various unipotent subgroups
$U \subset GL_N$.  In contrast to our subsequent constructions, this extension involves little computation.
Throughout this discussion, $r$ will denote an arbitrary positive integer.  The main result of this section, 
Theorem \ref{thm:coh-exist} gives sufficient conditions for a simple group $G$ given together with an embedding
$i: G \subset GL_N$ to admit an induced  map $\phi_{G,r}$  which in turn determine an induced map for certain 
unipotent subgroups $U_J \subset G$.

Recall that for any linear algebraic group $G$ over $k$, 
the $r$-th iterate $F^r: G \to G^{(r)}$ of the Frobenius map $F: G \to G^{(1)}$ admits a 
scheme theoretic kernel $G_{(r)}  \equiv  ker\{ F^r \}$ which is an infinitesimal group scheme of height $r$.  
The coordinate algebra $k[G_{(r)}]$ of $G_{(r)}$ equals the
finite dimensional commutative Hopf algebra $k[G]/I^{p^r}$, where $I$ is the maximal ideal at the identity of $G$
and where $I^{p^r}$ is the ideal generated by $\{ f^{p^r}, f \in I \}$.  
A (rational) $G_{(r)}$-module is a comodule for $k[G_{(r)}]$ or, equivalently, a module for 
$kG_{(r)} \ \equiv \ (k[G_{(r)}])^\#$ (the $k$-linear dual of  $k[G_{(r)}]$ with its inherited Hopf algebra structure).  
For $G$ defined over $\bF_{p^r}$, we may view the Frobenius map $F^r$
as an endomorphism of $G$ and  $G_{(r)} \subset G$ as the kernel of $F^r: G \to G$.

The universal, $GL_N$-invariant classes $e_{r-\ell}  = e_{r-\ell}^{(0)} \in H^{2p^{r-\ell-1}}(GL_N,\gl_N^{(r-\ell)})$ of \cite{FS} 
and their $\ell$-th Frobenius twist $e_{r-\ell}^{(\ell)} \in H^{2p^{r-\ell-1}}(GL_N,\gl_N^{(r)})$ (i.e., pull-back 
along the $\ell$-th iterate $F^\ell$ of the Frobenius morphism $F: GL_N \to GL_N$) are elements in the rational
cohomology of the (reductive) algebraic group $GL_N$.   The restriction of $e_{r-\ell}^{(\ell)}$ to $GL_{N(r)}$, 
\begin{equation}
\label{FS:er}
(e_{r-\ell}^{(\ell)})_{(r)} \ \in \ H^{2p^{r-\ell-1}}(GL_{N(r)},\gl_N^{(r)}) \ \simeq \ H^{2p^{r-\ell-1}}(GL_{N(r)},k) \otimes \gl_N^{(r)},
\end{equation}
can be identified with a $GL_N$-equivariant map 
$$\gl_N^{\#(r)}[2p^{r-\ell-1}] \to H^{2p^{r-\ell-1}}(GL_{N(r)},k)$$
(vanishing on the dual trace class $Tr^{(r)} \in \gl_N^{\#(r)}$), thereby determining the $GL_N$-equivariant map of
commutative $k$-algebras (\ref{def:phi-GL}).  For $\ell < r$, the Frobenius map $F^\ell$ restricts to 
$F^\ell: GL_{N(r)} \to GL_{N(r)}$ and factors as 
$$GL_{N(r)} \twoheadrightarrow GL_{N(r)}/GL_{N(\ell)} \simeq GL_{N(r-\ell)} \subset GL_{N(r)}.$$
The Frobenius twist $(e_{r-\ell}^{(\ell)})_{(r)}$ can thus be realized as the pull-back along 
$GL_{N(r)} \twoheadrightarrow GL_{N(r-\ell)}$ of 
$(e_{r-\ell}^{(0)})_{(r-\ell)} \ \in \ H^{2p^{r-\ell-1}}(GL_{N(r-\ell)},\gl_N^{(r-\ell)}).$

The following proposition summarizes some of the basic properties of these universal classes (whose
proofs can be found in \cite{FS} and \cite{SFB1}).

\begin{prop}
\label{prop:basic}
With notation as above,
\begin{enumerate}
\item
$(e_{r-\ell}^{(\ell)})_{(\ell+1)} \not= 0 \ in \ H^{2p^{r-\ell-1}}(GL_{N(\ell+1)},\gl_N^{(r)})$.
\item
$(e_{r-\ell}^{(\ell)})_{(\ell)} = 0 \ in \ H^{2p^{r-\ell-1}}(GL_{N(\ell)},\gl_N^{(r)})$.
\item 
$e_{r-\ell} \in H^{2p^{r-\ell-1}}(GL_N,\gl_N^{(r-\ell)})$ restricts via the ``standard inclusion" $GL_{N-1} 
\subset GL_N$ to $e_{r-\ell} \in H^{2p^{r-\ell-1}}(GL_{N-1},\gl_{N-1}^{(r-\ell)})$.
\item
For any root subgroup $E_{i,j}: \bG_a \to GL_N$ (with $i < j$), the restriction of 
$e_{r-\ell}^{(\ell)}$ to $H^{2p^{r-\ell-1}}(\bG_{a(1)},\gl_N^{(r)})$ equals
$x_1^{p^{r-\ell-1}} \otimes X_{i,j}^{(r)}$, where $x_1 \in H^2( \bG_{a(1)},k)$
is the ``canonical generator".
\end{enumerate}
\end{prop}

Following \cite{SFB1} (specifically, the notation of the proof of Proposition 5.1 of \cite{SFB1}), 
we use the following notation:  we identify  $S^*(\oplus_{\ell = 0}^{r-1} (\gl_N^{\#(r)}[2p^{r-\ell-1}]))$
with the algebra of functions  on the affine space $\bA^{rN^2} = \prod_{\ell = 0}^{r-1} M_{n,n}$, identifying 
$X^{i,j}(\ell) \in  \gl_N^{\#(r)}[2p^{r-\ell-1}]$
with the $(i,j)$ coordinate function of the $\ell$-th factor.  

For any affine group scheme $G$
over $k$, we use the notation $V_r(G)$ for 
the affine scheme of  1-parameter subgroups of $G$ of height $r$ (i.e., homomorphisms $\bG_{a(r)} \to G$
of group schemes over $k$) with 
coordinate algebra $k[V_r(G)]$ as in \cite{SFB1}.

We state two theorems of Suslin-Friedlander-Bendel.  The first originates from the observation in 
\cite{SFB1} that every infinitesimal 1-parameter subgroup $\psi: \bG_{a(r)} \to GL_N$ is uniquely of the form
$$exp_{\ul B} \ \equiv \ \prod_{s=0}^{r-1} exp_{B_s} \circ F^s:  \bG_{a(r)} \to GL_N, 
\quad \ t \mapsto \prod_{s=0}^{r-1} exp(t^{p^s}\cdot B_s)$$
for some $r$-tuple $\ul B = (B_0, \ldots B_{r-1})$ of $p$-nilpotent, pair-wise commuting elements of $\gl_N$.

\begin{thm} (\cite[5.1]{SFB1})
\label{thm:SFB-factor}
The map $\phi_{GL_N,r}$ of (\ref{def:phi-GL}) factors as
\begin{equation}
\label{def:phi-bar-GL}
\phi_{GL_N,r} \ = \ \ol \phi_{GL_N,r} \circ q: S^*(\oplus_{\ell = 0}^{r-1} (\gl_N^{\#(r)}[2p^{r-\ell-1}])) \ 
\twoheadrightarrow \ k[V_r(GL_N)] \ \to 
H^\bu(GL_{N(r)},k).
\end{equation}
Here, $q$ is the quotient by the ideal generated by the relations 
$$R_{i,j,\ell,\ell^\prime} \quad = \quad \sum_t X^{i,t}(\ell)\cdot X^{t,j}(\ell^\prime) - X^{i,t}(\ell^\prime)\cdot X^{t,j}(\ell)$$
$$ S_{i,j,\ell} \quad = \quad \sum_{t_1,\ldots ,t_{p-1}} X^{i,t_1}(\ell) \cdot X^{t_1,t_2}(\ell) \cdots X^{t_{p-1},j}(\ell)$$
for all $i,j,\ell,\ell^\prime$ as in \cite[5.1]{SFB1}.  Thus, $V_r(GL_N)$ is identified with the $k$-scheme of $r$-tuples
of $p$-nilpotent matrices (in view of relations $\{ S_{i,j,\ell} \}$) which are pair-wise commuting (in view of relations
$\{ R_{i,j,\ell,\ell^\prime} \}$).
\end{thm}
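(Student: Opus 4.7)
The plan is to verify that the images under $\phi_{GL_N,r}$ of the families $\{R_{i,j,\ell,\ell'}\}$ and $\{S_{i,j,\ell}\}$ vanish in $H^\bu(GL_{N(r)},k)$, so that $\phi_{GL_N,r}$ descends to the asserted quotient. The subsequent identification of this quotient with $k[V_r(GL_N)]$ follows from the classical parametrization of height-$r$ infinitesimal 1-parameter subgroups $\mu: \bG_{a(r)} \to GL_N$ by $r$-tuples $(T_0,\ldots,T_{r-1})$ of pairwise commuting $p$-nilpotent matrices, via $\mu(t) = \prod_{\ell=0}^{r-1} \exp(T_\ell t^{p^\ell})$ (well-defined precisely because $T_\ell^p = 0$ and the $T_\ell$ commute).

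First I would repackage the generators: for each $\ell$, assemble $\{X^{i,j}(\ell)\}_{i,j}$ into an $N \times N$ matrix $\mathbb{M}_\ell$ with entries in $H^\bu(GL_{N(r)},k)$. By the construction of $\phi_{GL_N,r}$, this matrix represents the class $(e_{r-\ell}^{(\ell)})_{(r)} \in H^{2p^{r-\ell-1}}(GL_{N(r)},k)\otimes \gl_N^{(r)}$ under the standard identification of $\gl_N$ with $N\times N$ matrices. With this repackaging, the relation $R_{i,j,\ell,\ell'}$ asserts that $\mathbb{M}_\ell$ and $\mathbb{M}_{\ell'}$ commute as matrices, while $S_{i,j,\ell}$ asserts $\mathbb{M}_\ell^p = 0$.

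To verify these identities in $H^\bu(GL_{N(r)},k)$, I would detect via the universal 1-parameter subgroup $\mu_{\text{univ}}$ parametrized by $V_r(GL_N)$. Pulling back $\mathbb{M}_\ell$ along $\mu_{\text{univ}}$ yields a class in $H^*(\bG_{a(r)},k) \otimes \End(k^N) \otimes k[V_r(GL_N)]$; using the known cohomology of $\bG_{a(r)}$ together with the exponential parametrization, this pullback equals $u_\ell \otimes T_\ell$, where $u_\ell \in H^{2p^{r-\ell-1}}(\bG_{a(r)},k)$ is a polynomial generator and $T_\ell$ is the universal $\ell$-th coordinate matrix. Since the universal $T_\ell$ pairwise commute and are $p$-nilpotent by construction of $V_r(GL_N)$, the pullbacks of $R_{i,j,\ell,\ell'}$ and $S_{i,j,\ell}$ vanish identically. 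To promote these vanishings to $H^\bu(GL_{N(r)},k)$ itself, one invokes the faithfulness of restriction to sufficiently many 1-parameter subgroups on the image of $\phi_{GL_N,r}$; this detection is underwritten by the non-triviality property (\ref{basis-er}) and its refinements, which guarantee that the classes $e_{r-\ell}^{(\ell)}$ restrict non-trivially to enough 1-parameter subgroups to separate the generators $X^{i,j}(\ell)$.

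The hardest step will be this detection/faithfulness argument: one must show that the joint restriction map along the universal 1-parameter subgroup is injective on the image of $\phi_{GL_N,r}$. This rests on the explicit Yoneda-extension construction of the universal classes $e_{r-\ell}$ in \cite{FS}, which supplies the fine control over restriction to infinitesimal 1-parameter subgroups of $GL_N$ needed to close the argument. Once faithful detection is available, the factorization is immediate, and the coincidence of the quotient algebra with $k[V_r(GL_N)]$ follows from the exponential parametrization recalled in the first paragraph.
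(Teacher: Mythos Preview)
The paper does not supply its own proof of this theorem; it is quoted from \cite[5.1]{SFB1}.  So the relevant comparison is between your proposal and the argument in \cite{SFB1}.

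Your outline has a genuine gap at the step you yourself flag as hardest.  Restricting along the universal 1-parameter subgroup and observing that the pullbacks of $R_{i,j,\ell,\ell'}$ and $S_{i,j,\ell}$ vanish only shows that $\phi_{GL_N,r}(R_{i,j,\ell,\ell'})$ and $\phi_{GL_N,r}(S_{i,j,\ell})$ lie in the kernel of $\psi_{GL_N,r}$.  By Theorem~\ref{thm:psi} that kernel is nilpotent, so you conclude only that these images are nilpotent in $H^\bu(GL_{N(r)},k)$, not that they are zero.  To get the honest factorization you need exact vanishing.  You try to close this by asserting that detection is faithful \emph{on the image of $\phi_{GL_N,r}$}, citing (\ref{basis-er}); but (\ref{basis-er}) only says individual generators restrict nontrivially, which says nothing about injectivity on polynomial combinations of generators.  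The obvious way to upgrade to injectivity would be to invoke $\psi_{GL_N,r}\circ\ol\phi_{GL_N,r}=F^r$ from Theorem~\ref{thm:psi}, but that identity already presupposes the factorization you are trying to prove, so the argument becomes circular.

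The proof in \cite{SFB1} does not go through detection.  It verifies the relations directly from the explicit Ext/Koszul construction of the universal classes $e_{r-\ell}$: one shows that the cup product $e_{r-\ell}^{(\ell)}\cup e_{r-\ell'}^{(\ell')}$, composed with the commutator map $\gl_N^{(r)}\otimes\gl_N^{(r)}\to\gl_N^{(r)}$, vanishes (giving $R_{i,j,\ell,\ell'}\mapsto 0$), and that the $p$-fold self-cup of $e_{r-\ell}^{(\ell)}$ composed with iterated matrix multiplication vanishes (giving $S_{i,j,\ell}\mapsto 0$).  These are honest equalities in cohomology, established by manipulating the explicit extensions, and they yield the factorization without any appeal to 1-parameter subgroup detection.
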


Theorem \ref{thm:SFB-factor} can be viewed as a complement (in the special case $G = GL_{N(r)}$) to the 
following theorem.

\begin{thm} \cite[5.2]{SFB2}, \cite[5.2]{SFB1}
\label{thm:psi}
Fix some integer $r \geq 1$.  Then for any infinitesimal group scheme $H$ of height $\leq r$, there
is a natural homomorphism of commutative $k$-algebras
\begin{equation}
\label{psi-map}
\psi: H^\bu(H,k) \ \to \ k[V_r(H)]
\end{equation}
whose kernel is nilpotent and whose image contains all $p^r$-th powers of elements of $k[V_r(H)]$.
If $H = G_{(r)}$, the $r$-th Frobenius kernel of some linear algebraic group $G$, then we denote
$\psi$ by $\psi_{G,r}: H^\bu(G_{(r)},k) \to k[V_r(G)]$.

The map $\psi_{G,r}$ is $G$-equivariant.

In the special case of $G = GL_{N(r)}$, the composition 
$$\psi_{GL_{N,r}} \circ \ol \phi_{GL_N,r}: k[V_r(GL_N)] \ \to \ k[V_r(GL_N)]$$
is the $r$-th iterate of the Frobenius map.  In particular, $\psi_{GL_{N,r}} (\phi_{GL_N,r}(X^{i,j}(\ell))) = (X^{i,j}(\ell))^{p^r}$.
\end{thm}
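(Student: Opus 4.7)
The plan is to construct $\psi$ by pulling back cohomology along the universal 1-parameter subgroup and extracting a specific top-degree coefficient, and then to deduce its structural properties from the Suslin-Friedlander-Bendel support-variety theory together with an explicit Suslin-Friedlander calculation for $GL_N$. Concretely, let $A = k[V_r(H)]$ and let $\mu_{\text{univ}}: \Gar \times \Spec A \to H \times \Spec A$ be the universal height-$r$ 1-parameter subgroup classified by $\id_{V_r(H)}$. Pullback along $\mu_{\text{univ}}$ gives an $A$-algebra map
$$\mu_{\text{univ}}^*: H^\bu(H, k) \otimes_k A \longrightarrow H^\bu(\Gar \times \Spec A, A),$$
whose target, since $p > 2$, is the even part of $(k[x_1, \dots, x_r] \otimes \Lambda(\lambda_1, \dots, \lambda_r)) \otimes_k A$ with $|x_i| = 2$, $|\lambda_i| = 1$. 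Projecting $\lambda_i \mapsto 0$ lands in $A[x_1, \dots, x_r]$; I define $\psi(c)$, for $c \in H^{2n}(H, k)$, to be the coefficient of $x_r^n$ in this image. Multiplicativity is immediate from the observation that the only way to split $x_r^{n_1 + n_2}$ into a product of monomials of polynomial degrees $n_1$ and $n_2$ is as $x_r^{n_1} \cdot x_r^{n_2}$. Naturality in $H$ follows from the universal property of $\mu_{\text{univ}}$, and $G$-equivariance (when $H = G_{(r)}$) follows from $G$-equivariance of the universal 1-parameter subgroup under conjugation.

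For nilpotence of $\ker \psi$ and the containment of all $p^r$-th powers in $\Im \psi$, I would invoke the SFB identification of the cohomological support variety with the scheme of 1-parameter subgroups: for $H$ of height $\leq r$, there is a natural isomorphism $V_r(H)_{\text{red}} \cong \Spec H^\bu(H, k)_{\text{red}}$ induced precisely by $\psi$. Nilpotent kernel is then the statement that $\psi$ is injective modulo nilpotents, while the $p^r$-th powers assertion follows because the reduced map is a bijection and the scheme-theoretic discrepancy between $\psi$ and this bijection is controlled by an $r$-th Frobenius, as exhibited in the $GL_N$ calculation below. The underlying support-variety theorem rests on the Friedlander-Suslin finite generation theorem and a careful spectral-sequence analysis with the universal class $e_r$, and is the deep content here.

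For the $GL_N$ computation, by definition $\ol \phi_{GL_N, r}(X^{i,j}(\ell))$ is the $(i,j)$-entry of the twisted restricted class $(e_{r-\ell}^{(\ell)})_{(r)} \in H^{2 p^{r-\ell-1}}(GL_{N(r)}, k) \otimes \gl_N^{(r)}$. Pulling back along the universal 1-p.s.\ of $GL_N$ reduces the computation to evaluating $e_{r-\ell}^{(\ell)}$ on a single 1-p.s. $\mu: \Gar \to GL_N$. Using the explicit construction of $e_{r-\ell}$ as a length-$p^{r-\ell-1}$ extension class from \cite{FS}, one computes that the $(i,j)$-entry of $\mu^*(e_{r-\ell}^{(\ell)})$ has coefficient of $x_r^{p^{r-\ell-1}}$ equal to $X^{i,j}(\ell)(\mu)^{p^r}$, where $X^{i,j}(\ell)(\mu)$ records the relevant coordinate of $\mu$ at the $\ell$-th Frobenius layer. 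Globalizing over the universal 1-parameter subgroup gives $\psi_{GL_N,r}(\ol\phi_{GL_N,r}(X^{i,j}(\ell))) = (X^{i,j}(\ell))^{p^r}$, and since the $X^{i,j}(\ell)$ generate $A = k[V_r(GL_N)]$, the full composition $\psi_{GL_N,r} \circ \ol\phi_{GL_N,r}$ is the $r$-th iterate of Frobenius.

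The main obstacle is the structural content — nilpotent kernel and surjectivity onto $p^r$-th powers — which essentially amounts to the SFB identification of the cohomological support variety with $V_r(-)_{\text{red}}$. The construction of $\psi$, its naturality and $G$-equivariance, and the explicit $GL_N$ identity are all relatively mechanical given the Suslin-Friedlander framework; by contrast, without the support-variety theorem one has no obvious way to propagate the vanishing of a single coefficient of $\mu_{\text{univ}}^*(c)$ to a global nilpotency statement on $c$.
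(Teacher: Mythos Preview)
The paper does not supply its own proof of this theorem: it is stated with attribution to \cite[5.2]{SFB1} and \cite[5.2]{SFB2} and used as a black box, with no proof environment following it. So there is nothing in the paper to compare your argument against. That said, your sketch is a faithful outline of the original SFB argument: the construction of $\psi$ via the coefficient of $x_r^n$ in the pullback along the universal height-$r$ one-parameter subgroup, the multiplicativity via the top-$x_r$-degree observation, naturality, and the $GL_N$ calculation with the classes $e_{r-\ell}^{(\ell)}$ are all exactly as in \cite{SFB2}.

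One point of logical hygiene: in the SFB papers the flow is not ``invoke the isomorphism $V_r(H)_{\text{red}} \cong \Spec H^\bu(H,k)_{\text{red}}$ to deduce the kernel/image properties of $\psi$,'' but the reverse. Nilpotence of $\ker\psi$ is proved directly by a detection argument (reduction to one-parameter subgroups via spectral sequences and the universal class $e_r$), and surjectivity onto $p^r$-th powers is obtained for $GL_{N(r)}$ from the identity $\psi_{GL_N,r}\circ\ol\phi_{GL_N,r}=F^r$ and then transported to arbitrary $H$ of height $\leq r$ by choosing a closed embedding $H\hookrightarrow GL_{N(r)}$ and using naturality together with the closed immersion $V_r(H)\hookrightarrow V_r(GL_N)$. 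Your phrasing ``the reduced map is a bijection and the scheme-theoretic discrepancy is controlled by an $r$-th Frobenius'' gestures at this but would be circular if read literally, since the reduced isomorphism is a \emph{consequence} of the two properties you are trying to establish. If you reorder those two paragraphs and make the embedding-into-$GL_N$ step explicit for the $p^r$-th-power assertion, the sketch is complete.
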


\begin{remark}
\label{rem:psi}
The assertion of Theorem \ref{thm:psi} of $G$-equivariance of $\psi_{G,r}$ arises from the naturality of $\psi$,
in particular the commutativity of the first displayed square of the proof of Theorem 1.14 of \cite{SFB1}.

As shown in \cite{SFB1}, $V_r(G)$ has a natural grading given by the monoid action of (right) composition by
$V_r(\bG_{a(r)})$ on $V_r(G)$; namely, one restricts this action of $\bA^r \simeq V_r(\bG_{a(r)}$) to the 
linear polynomials $\bA^1 \subset \bA^r$.  With this grading, $X^{i,j}(\ell) \in k[V_r(GL_N)]$ has grading 
$p^{r-\ell-1}$ mapping via $\ol \phi_{GL_N,r}$ to a cohomology class of degree $2p^{r-\ell-1}$, then further mapping
via $\psi_{GL_{N,r}}$ to $F^r(X^{i,j}(\ell))$ with degree $p^r\cdot p^{r-\ell-1}$.
\end{remark}

We recall that a closed embedding $G \to GL_N$ of a linear algebraic group $G$ is said to be of {\bf exponential type} if 
the map of schemes $exp: \bG_a \times \cN_p(\gl_N) \to V_r(GL_N)$ given by the usual truncated exponential map 
restricts to $\cE: \bG_a \times \cN_p(\fg) \to V_r(G)$.  (As usual, for any $p$-restricted Lie algebra $\fg$, we denote the
$p$-operator as $(-)^{[p]}: \fg \to \fg$ and we denote by $\cN_p(\fg) \subset \fg$ the subvariety whose $k$ points are  
elements $X \in\fg$ such that $X^{[p]} = 0$.)    For any
$G$ equipped with such an embedding, every infinitesimal 1-parameter subgroup
$\bG_{a(r)} \to G$ is uniquely of the form $\prod_{s=0}^{r-1} \cE_{B_s} \circ F^s$ for some $\ul B \in \cC_r(\cN_p(\fg))$
by  \cite[2.5]{S3};
here, $\cC_r(\cN_p(\fg))$ is
the variety of $r$-tuples $(B_0,\ldots,B_{r-1})$ of $p$-nilpotent, pairwise commuting elements of $\fg$. 
.

\begin{prop}
\label{prop:def-relns}
Let $i: G \to GL_N$ be a closed embedding of exponential type for some linear algebraic group $G$.
Then the following square is a cartesian square of closed immersions
\begin{equation}
\label{cartesian}
\begin{xy}*!C\xybox{%
\xymatrix{
V_r(G) \ar[r]^{i_V} \ar[d] & V_r(GL_N) \ar[d] \\
\fg^{\times r} \ar[r]^{i} & \gl_N^{\times r}.
 }
}\end{xy}
\end{equation}

In other words, we have a cocartesian square of quotient maps of $k$-algebras
\begin{equation}
\label{cocartesian}
\begin{xy}*!C\xybox{%
\xymatrix{
S^*(\oplus_{\ell = 0}^{r-1} (\gl_N^{\#(r)}[2p^{r-\ell-1}])) \ar[d]^{i^*}  \ar[r]^-{q_{GL_N,r}}
& k[V_r(GL_N)] \ar[d]^{i_V^*}  \\
S^*(\oplus_{\ell = 0}^{r-1} (\fg^{\#(r)}[2p^{r-\ell-1}])) \ar[r]^-{q_{G,r}}
& k[V_r(G)].
 }
 }\end{xy}
\end{equation}
\end{prop}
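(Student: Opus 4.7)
The plan is to leverage the identification of $V_r(U)$ and $V_r(GL_N)$ as schemes of pairwise-commuting $r$-tuples of $p$-nilpotent elements, which is exactly what the exponential-type hypothesis buys us. First I would pass from the cartesian square (\ref{cartesian}) of closed immersions to the cocartesian square (\ref{cocartesian}) by applying $\Spec$; the equivalence of the two formulations is formal, provided one checks the vertical maps in (\ref{cartesian}) are indeed closed immersions. That the map $V_r(GL_N) \to \gl_N^{\times r}$ is a closed immersion is immediate from Theorem \ref{thm:SFB-factor}, and the corresponding statement for $V_r(U) \to \fu^{\times r}$ is the content of the identification $V_r(U) \cong \mathcal{C}_r(\mathcal{N}_p(\fu))$ recalled from \cite[2.5]{S3}.

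Next, I would make the content of being cocartesian explicit. Writing $R := S^*(\oplus_{\ell=0}^{r-1}\gl_N^{\#(r)}[2p^{r-\ell-1}])$ and $\ker(q_{GL_N}) = (R_{i,j,\ell,\ell^\prime},\ S_{i,j,\ell})$, the square (\ref{cocartesian}) is cocartesian if and only if $\ker(q_U)$ is generated as an ideal in $S^*(\oplus_{\ell=0}^{r-1}\fu^{\#(r)}[2p^{r-\ell-1}])$ by the images under $i^*$ of these relations. Since $V_r(GL_N)$ parametrizes precisely the $r$-tuples $\ul B \in \gl_N^{\times r}$ which pairwise commute and each of which satisfies $B_s^p = 0$ (this is what the relations $R$ and $S$ encode), the scheme-theoretic preimage of $V_r(GL_N)$ along $\fu^{\times r} \hookrightarrow \gl_N^{\times r}$ is the closed subscheme of $\fu^{\times r}$ cut out by exactly these same conditions, now interpreted inside $\fu$.

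The crucial point is then that $\fu \subset \gl_N$ is a \emph{restricted} Lie subalgebra, so the $[p]$-operation on $\fu$ is the restriction of $X \mapsto X^p$ on $\gl_N$. Consequently the condition $B_s^{[p]} = 0$ in $\fu$ is literally the same as the condition $B_s^p = 0$ in $\gl_N$ for $B_s \in \fu$; similarly the commutator conditions are intrinsic. Therefore the preimage of $V_r(GL_N) \subset \gl_N^{\times r}$ under $i^*$ coincides scheme-theoretically with $\mathcal{C}_r(\mathcal{N}_p(\fu)) = V_r(U)$, which gives the desired cartesian property.

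The main obstacle will be the scheme-theoretic (as opposed to set-theoretic) step: one has to check that the images $i^*(R_{i,j,\ell,\ell^\prime})$ and $i^*(S_{i,j,\ell})$ actually generate $\ker(q_U)$, not merely cut it out set-theoretically. This is where one genuinely uses that the identifications $V_r(GL_N) \cong \mathcal{C}_r(\mathcal{N}_p(\gl_N))$ and $V_r(U) \cong \mathcal{C}_r(\mathcal{N}_p(\fu))$ hold as equalities of \emph{schemes} (including nilpotent structure), which is the substantive input from \cite{SFB1} and \cite{S3}. Once that is in hand, the agreement of $[p]$-operations reduces the defining ideals to the same generators, and the cartesian/cocartesian property follows.
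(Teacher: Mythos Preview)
Your proposal is correct and follows essentially the same route as the paper: both arguments use the exponential-type hypothesis to identify $V_r(U)$ and $V_r(GL_N)$ with the schemes $\mathcal{C}_r(\mathcal{N}_p(\fu))$ and $\mathcal{C}_r(\mathcal{N}_p(\gl_N))$, then observe that the commutator and $[p]$-conditions on $\fu$ are literally the restrictions of those on $\gl_N$, so the defining relations of Theorem~\ref{thm:SFB-factor} restrict to defining relations for $V_r(U)\subset\fu^{\times r}$. Your explicit attention to the scheme-theoretic (rather than merely set-theoretic) nature of the identifications from \cite{SFB1} and \cite{S3} is a useful clarification, but the argument is otherwise the same.
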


\begin{proof}
The condition that $i: G \to GL_N$ is of exponential type enables us to identify
the embedding $V_r(G) \to V_r(GL_N)$ of schemes representing height $r$ infinitesimal
1-parameter subgroups with the embedding of schemes of $r$-tuples of $p$-nilpotent, pair-wise 
commuting elements of respective Lie algebras.  Using this, we verify that (\ref{cartesian})
arises as a cartesian square of representable functors.  Namely, we verify that the
defining relations $\{ R_{i,j,\ell,\ell^\prime}, S_{i,j,\ell} \}$ in 
$S^*(\oplus_{\ell = 0}^{r-1} (\gl_N^{\#(r)}[2p^{r-\ell-1}]))$ (for \\
$V_r(GL_N) \subset (\gl_N)^{\times r}$) 
have image in $S^*(\oplus_{\ell = 0}^{r-1} (\fg^{\#(r)}[2p^{r-\ell-1}]))$ (i.e., restrictions to 
$(\fg)^{\times r} \subset (\gl_N)^{\times r}$) which generate defining relations for $V_r(G) \subset (\gl_N)^{\times r}$.
This follows from the observation for $X, Y \in \fg$ that the condition that $X,Y$ commute in $\fg$ is the 
same as the condition that their images commute in $\gl_N$, and the condition that $X^{[p]} = 0$ is the condition
that the image of $X$ in $\gl_N$ has $p$-th power 0.
 
The cartesian square  (\ref{cartesian}) is equivalent to the cocartesian square (\ref{cocartesian})
of coordinate algebras thanks to the anti-equivalence of categories relating affine $k$-schemes and
finitely generated commutative $k$-algebras.
\end{proof}

We thank R Guralnick for explaining the following result of S. Garibaldi given in \cite{Garib}.  

\begin{prop} \cite[Prop 8.1]{Garib} 
\label{prop:Garib}
If $G$ is a simple algebraic group for which $p>2$ is a very good prime (i.e., 
for type $A_{n-1}$, $p$ does not divide $n$; for type $G_2, F_4, E_6, E_7$, $p > 3$; for type $E_8$,  $p > 5$),
then there exist a closed embedding $i: G \to GL_N$ such that the induced map $i: \fg \to \gl_N$ admits
a unique $G$-equivariant splitting $\tau: \gl_N \to \fg$.
\end{prop}

Garibaldi's result is proved for a split, almost simple algebraic group over an arbitrary field $F$ and uses
an embeddding $G \subset GL_N$ associated to a representation defined over $F$.  The splitting
is also defined over $F$.

The following theorem can be interpreted as giving a lower bound on the ``size" of $H^\bu(G_{(r)},k)$.

\begin{thm}
\label{thm:coh-exist}
Let $G$ be a simple algebraic group and assume that $p > 2$ is very good for $G$.   Assume give some 
embedding $i: G \subset GL_N$ as in Proposition \ref{prop:Garib} which is also an embedding of exponential type.  Let
$\tau: \gl_N \to \fg = Lie(G)$ be the unique $G$-equivariant splitting of $i: \fg \to \gl_N$.     
Set \ $\phi_{G,r} \ = \ i^* \circ \phi_{GL_N,r} \circ \tau^*$.  Then
$$\psi_{G,r} \circ \phi_{G,r} : S^*(\oplus_{\ell = 0}^{r-1} (\fg^{\#(r)}[2p^{r-\ell-1}])) \to  H^\bu(G_{(r)},k) \to k[V_r(G)]$$
has image containing all $p^r$-th powers. 

Consider the unipotent radical $U_J$ of $P_J \subset G$  
for some subset $J$ of the set of fundamental positive roots $\Pi$ of a chosen root system for $G$ (given by a
choice $B \subset G$ of Borel subgroup with maximal torus $T$), denote by
$\tau_{U_J}: \fg \to \fu_J$ the unique $T$-equivariant splitting of $i_{U_J}: \fu \to \fg$, and set $\phi_{U_J,r} = 
 i_{U_J}^* \circ \phi_{G,r} \circ \tau_{U_J}^*$.  Then the composition 
$$\psi_{U_J,r} \circ \phi_{U_J,r}: S^*(\oplus_{\ell = 0}^{r-1} (\fu_J^{\#(r)}[2p^{r-\ell-1}])) \to H^\bu((U_J)_{(r)},k) \to k[V_r(U_J)]$$
has image containing all $p^r$-th powers and is injective when restricted to each $\fu_J^{\#(r)}[2p^{r-\ell-1}]$.
\end{thm}

\begin{proof}
Consider the following diagram, where $F^r$ denotes the $r$-th power of the Frobenius map:
\begin{equation}
\label{commute:exptype}
\begin{xy}*!C\xybox{%
\xymatrix{
S^*(\oplus_{\ell = 0}^{r-1} (\gl_N^{\#(r)}[2p^{r-\ell-1}]))  \ar[r]^-{q_{GL_N,r}}
 \ar@/^2pc/ [rr]^-{\phi_{GL_N,r}} 
& k[V_r(GL_N)] \ar[d]_{i_V^*} \ar[r]^-{\ol \phi_{GL_N,r}}   \ar@/^2pc/ [rr]^-{F^r}
& H^\bu(GL_{N(r)},k) \ar[d]^{i^*} \ar[r]^{\psi_{GL_N,r}} & k[V_r(GL_N)] \ar[d]_{i_V^*}\\
S^*(\oplus_{\ell = 0}^{r-1} (\fg^{\#(r)}[2p^{r-\ell-1}])) \ar[r]^-{q_{G,r}} \ar[u]^{\tau^*}
 \ar@/_2pc/ [rr]_-{\phi_{G,r}} & k[V_r(G)] \ar@/_2pc/ [rr]_-{F^r} 
& H^\bu(G_{(r)},k) \ar[r]^{\psi_{G,r}} & k[V_r(G)]  
 }
}\end{xy}
\end{equation}
Commutativity of the left square is a consequence of Proposition \ref{prop:def-relns} and commutativity of
the right square follows from the functoriality of $\psi$; commutativity of the left rectangle follows from
the definition of   $\phi_{G,r}$, whereas commutativity of the right rectangle is a consequence of the 
functoriality of $F^r$ for maps of varieties defined over $\bF_{p^r}$ (granted that Garibaldi's splitting is
defined over $\bF_p$, as observed after the statement of Proposition \ref{prop:Garib}).

To show that $\psi_{G,r} \circ \phi_{G,r}$ has image containing all $p^r$-th powers, we observe that 
$F^r: k[V_r(G)] \to k[V_r(G)]$ has image containing all $p^r$-th powers.  Thus, a simple diagram chase around the commutative
diagram (\ref{commute:exptype}) using the surjectivity of $q_{G,r}$ verifies this assertion.

Consider now the 
 following diagram obtained by replacing $i: G \to GL_N$ in (\ref{commute:exptype}) by $i_{U_J}: U_J \to G$:
 \begin{equation}
\label{commute:Utype}
\begin{xy}*!C\xybox{%
\xymatrix{
S^*(\oplus_{\ell = 0}^{r-1} (\fg^{\#(r)}[2p^{r-\ell-1}])) \ar[r]^-{q_{G,r}}
 \ar@/^2pc/ [rr]^-{\phi_{G,r}} & k[V_r(G)] \ar[d]^{i_V^*} \ar@/^2pc/ [rr]^-{F^r} 
& H^\bu(G_{(r)},k) \ar[d]^{i^*} \ar[r]^{\psi_{G,r}} & k[V_r(G)] \ar[d]^{i_V^*}  \\
S^*(\oplus_{\ell = 0}^{r-1} (\fu_J^{\#(r)}[2p^{r-\ell-1}])) \ar[r]^-{q_{U_J,r}} \ar[u]^{\tau_{U_J}^*}
 \ar@/_2pc/ [rr]_-{\phi_{U_J,r}} & k[V_r(U_J)] \ar@/_2pc/ [rr]_-{F^r} 
& H^\bu((U_J)_{(r)},k) \ar[r]^{\psi_{U_J,r}} & k[V_r(U_J)].
 }
}\end{xy}
\end{equation}
The previous argument applies with only notational changes to verify the corresponding assertions for 
$\psi_{U_J,r} \circ \phi_{U_J,r}$.
\end{proof}

\begin{ex}
\label{ex:exp-type}
As stated in  \cite[1.8]{SFB1}, the classical simple algebraic groups $Sp_{2n}, \ SO_n$ and 
$SL_n$  admit  embeddings of exponential type.  Namely,  
one considers a vector space (of dimension $2n$ for $Sp_{2n}$, of dimension $n$ for $O_n$) equipped 
with a non-degenerate bilinear form and one takes the embedding given by considering those linear 
isomorphisms preserving the form.   These embeddings $i: G \to GL_N$ are defined over $\bF_p$ 
and also satisfy the condition that  
$i: \gl_N \to \fg$ admits a (unique) $G$-equivariant splitting.
\end{ex}

For $U_J \subset GL_N$, we have the following natural strengthening of Theorem \ref{thm:coh-exist}.
We denote by $T_N \subset GL_N$ the maximal torus of diagonal matrices.

\begin{prop}
\label{prop:coh-UN}
Let $i_{U_J}: U_J \to GL_N$ be the inclusion of the unipotent radical of a parabolic subgroup
of $GL_N$.  Then we have the following $T_N$-equivariant, commutative diagram, a stronger version of the 
diagram obtained from (\ref{commute:exptype}):
\begin{equation}
\label{commute:exptype-UN}
\begin{xy}*!C\xybox{%
\xymatrix{
S^*(\oplus_{\ell = 0}^{r-1} (\gl_N^{\#(r)}[2p^{r-\ell-1}]))  \ar[r]^-{q_{GL_N,r}} \ar[d]^{i_{U_N}^*}
 \ar@/^2pc/ [rr]^-{\phi_{GL_N,r}} 
& k[V_r(GL_N)] \ar[d]_{i_V^*} \ar[r]_-{\ol \phi_{GL_N,r}}   \ar@/^2pc/ [rr]^-{F^r}
& H^\bu(GL_{N(r)},k) \ar[d]^{i_{U_N}^*} \ar[r]^{\psi_{GL_N,r}} & k[V_r(GL_N)] \ar[d]_{i_V^*}\\
S^*(\oplus_{\ell = 0}^{r-1} (\fu_J^{\#(r)}[2p^{r-\ell-1}])) \ar[r]^-{q_{U_J,r}} 
 \ar@/_2pc/ [rr]_-{\phi_{U_J,r}} & k[V_r(U_J)] \ar@/_2pc/ [rr]_-{F^r} \ar[r]^{\ol \phi_{U_J,r}}
& H^\bu((U_J)_{(r)},k) \ar[r]^{\psi_{U_J,r}} & k[V_r(U_J)]  
 }
}\end{xy}
\end{equation}
In particular, any element in the kernel of $\ol \phi_{U_J,r}$ has $p^r$-th power 0.
\end{prop}

\begin{proof}
Observe that the $T$-weights of $H^\bu((U_J)_{(r)},k)$ are all negatives of roots in the lattice generated by
the roots of $U_N$, so that $i_{U_J}^* \circ \phi_{GL_N,r}$ restricted to each $\fu_J^{\#(r)}[2p^{r-\ell-1}]$ must
factor as a $T$-equivariant map through the projection $\gl_N^{\#(r)}[2p^{r-\ell-1}] \to \fu_J^{\#(r)}[2p^{r-\ell-1}]$.
This determines the map $\phi_{U_J,r}$ making commutative the left rectangle (i.e., double square) of 
(\ref{commute:exptype-UN}).   In view of the surjectivity of $q_{GL_N,r}$,
the map $\ol \phi_{U,r}$ is uniquely determined making the middle square commute.

Recall that the composition of $F$ with the geometric Frobenius is the $p$-th power map.  This implies
that any element in the kernel of $F^r$, and thus any element in the kernel of $\ol \phi_{U,r}$, has $p^r$-th 
power 0.
\end{proof}

The next proposition helps to pin down $\phi_{G,r}$.

\begin{prop}
\label{prop:phiGamma2}
Let $i_{U_J}: U_J \to GL_N$ be the inclusion of the unipotent radical of a parabolic subgroup
of $GL_N$.  Denote by 
$\Gamma_2 \subset U_J$ the commutator subgroup of $U_J$
with Lie algebra $\gamma_2$, and denote by $q: U_J \to U_J/\Gamma_2$ the projection.
Then the following square commutes for each $\ell, \ 0 \leq \ell < r$:
\begin{equation}
\label{square:Gamma2}
\begin{xy}*!C\xybox{%
\xymatrix{
\fu_J^{\#(r)}[2p^{r-\ell-1}] \ar[r]^= & \fu_J^{\#(r)}[2p^{r-\ell-1}]  \ar[r]^-{\phi_{U_J,r}}  & H^{2p^{r-\ell-1}}((U_J)_{(r)},k) \\
(\fu_J/\gamma_2)^{\#(r)}[2p^{r-\ell-1}] \ar[u]^{q^*}\ar[r]   \ar@/_2pc/ [rr]_-{\phi_{U_J/\Gamma_2,r}}
 & S^{p^{r-\ell-1}}((\fu_J/\gamma_2)^{\#(\ell+1)}[2]) \ar[r]& 
H^{2p^{r-\ell-1}}((U_J/\Gamma_2)_{(r)},k)  \ar[u]^{q^*}.
}
 }\end{xy}
\end{equation}
The lower left map of (\ref{square:Gamma2}) is a special case (for $V = (\fu_J/\gamma_2)^{\#(\ell+1)}$)
of the natural embedding $V^{(r-\ell-1)} \subset S^{p^{r-\ell-1}}(V)$ sending $v \in V$ to its $p^{r-\ell-1}$-st 
power.   We have abused notation by using $\phi_{U_J,r}$
to denote the restriction to the indicated summand of the map of (\ref{commute:exptype-UN}) with this name.

For a minimal weight $\alpha$ of $\fu_J$, the map 
$\phi_{U/\Gamma_2,r}$ restricted to the weight space $k\cdot X^\alpha(\ell)$ of weight $p^r\alpha$
is explicitly described as the projection $(\fu/\gamma_2)^{\#(r)}[2p^{r-\ell-1}] \to k\cdot X^\alpha{(\ell)}$,
followed by the map sending $X^\alpha(\ell)$ to the $p^{r-\ell-1}$-st power of the natural class in $H^2(\bG_{a(r)},k)$,
followed by the inflation map $H^*(\bG_{a(r)},k) \to H^*((U_J/\Gamma_2)_{(r)},k)$ induced by the quotient map
$U_J/\Gamma_2 \to \bG_a$ onto the root subgroup indexed by the root $\alpha$.
\end{prop}

\begin{proof}
We first observe the commutativity of
\begin{equation}
\label{square:Ealpha}
\begin{xy}*!C\xybox{%
\xymatrix{
\gl_N^{\#(r)}[2p^{r-1}] \ar[r]^-{\phi_{GL_N,r}} \ar[d]^{i_{U_J}^*}& H^{2p^{r-1}}(GL_{N(r)},k) \ar[d]^{i_{U_J}^*} \\
\fu_J^{\#(r)}[2p^{r-1}] \ar[r]^-{\phi_{U_J,r}} & H^{2p^{r-1}}((U_J)_{(r)},k) \\
(\fe_\alpha)^{\#(r)}[2p^{r-1}] \ar[u]^{q_\alpha^*} \ar[r]_-{\phi_{E_\alpha,r}} & H^{2p^{r-1}}((E_\alpha)_{(r)},k)  \ar[u]^{q_\alpha^*}
}
 }\end{xy}
\end{equation}
where $\bG_a \simeq E_\alpha \subset U_J$ is the root subgroup associated to a minimal root $\alpha$ and $q_\alpha: U_J/\Gamma_2 \to 
E_\alpha$ is the projection, a group homomorphism since $\alpha$ is minimal.  This commutativity of (\ref{square:Ealpha})
follows from Proposition \ref{prop:basic}(4) and the fact $i_\alpha: \bG_a \to U_J/\Gamma_2$ is left inverse to $q_\alpha$.
This implies the commutativity of (\ref{square:Gamma2}) for $\ell = 0$

For $\ell > 0$, we apply (\ref{square:Ealpha}) with $r$ replaced by $r-\ell$ and use the commutativity of
\begin{equation}
\begin{xy}*!C\xybox{%
\xymatrix{
\fu_J^{\#(r-\ell)}[2p^{r-\ell-1}] \ar[r] \ar[d]^{(-)^\ell} & H^{2p^{r-\ell- 1}}((U_J)_{(r-\ell)},k) \ar[d]^{\ell *}\\
\fu_J^{\#(r)}[2p^{r-1}] \ar[r]  & H^{2p^{r-\ell- 1}}((U_J)_{(r)},k) 
}
 }\end{xy}
\end{equation}
which is a direct consequence of the definition of $\phi_{GL_N,r}$.
\end{proof}

\vskip .2in


\section{Stabilization of $H^\bu(U_{(r)},k)$ with respect to $r$}
\label{sec:stable}

Part of the author's motivation for considering $H^*(U_{(r)},k)$ was
 the hope that some form of ``continuous cohomology" for the unipotent
algebraic group $U$ would prove useful in the study of the (rational) representations
of $U$.  This requires understanding the limiting behavior of
$H^*(U_{(r)},k)$ as $r$ increases.  Earlier computational information for  $H^*(U_{(r)},k)$   
(especially in \cite{SFB1}, \cite{SFB2}) shed little if any light on this limiting behavior.

We begin by recalling the following spectral sequence formulated by H. Andersen and J. Jantzen.

\begin{prop} \cite{AJ}, \cite[\S 9]{J}
\label{prop:AJ}
Let $H$ be an irreducible affine group scheme (over $k$) and let $I_1 \subset k[H]$
denote the maximal ideal at the identity of $H$.  The filtration of $k[H]$ by powers of $I_1$
leads to an associated graded Hopf algebra which is the coordinate 
algebra of the vector group scheme $gr(H)$.  For any rational $H$-module $M$,
there is a naturally associated convergent spectral sequence
\begin{equation}
\label{specAJ}
{}^{AJ}E^{i,j}_1(H) \ = \ H^{i+j}(gr(H),k)_i \otimes M \ \Rightarrow \ H^{i+j}(H,M),
\end{equation}
where $H^*(gr(H),k)_i$ is the cohomology algebra of the $i^{th}$ graded summand of 
the Hochschild complex of $gr(H)$.

If  $G$ is a  irreducible linear algebraic group (hence, a reduced affine group scheme of finite
type over $k$) and $p \not= 2$, then ${}^{AJ} E_1^{i,j}(G)$ can be identified with the direct sum of
tensor products of the form
\begin{equation}
\label{summands}
S^{a_1}(\fg^{\#(1)}[2])  \otimes S^{a_2}(\fg^{\#(2)}[2]) \otimes \cdots \otimes \Lambda^{b_1}(\fg^\#[1])
\otimes \Lambda^{b_2}(\fg^{\#(1)}[1]) \otimes \cdots
\end{equation}
where the sum is over all sequences $\{ a_n \}, \{ b_n \}$ with each $a_n \geq 0$, each $b_n \geq 0$
and 
$$ i \ = \ \sum_{n\geq 1} (a_n p^n + b_n p^{n-1}), \quad \quad i+j \ = \ \sum_{n\geq 1}(2a_n + b_n). $$
Moreover, for any $r \geq 1$, ${}^{AJ}E_1^{i,j}(G_{(r)})$ can be identified with the direct sum of those
tensor products of the form (\ref{summands}) with $a_n = b_n = 0, \ n > r$.
\end{prop}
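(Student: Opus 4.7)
The plan is to construct a filtration spectral sequence from the $I_1$-adic filtration on $k[H]$, identify the $E_1$-page with cohomology of the associated graded Hopf algebra (which is a vector group scheme), and then compute that cohomology via K\"unneth from (\ref{Ga-coh}). First I would verify that $k[H] \supset I_1 \supset I_1^2 \supset \cdots$ is a filtration of Hopf algebras: multiplication clearly respects it, and the comultiplication satisfies $\Delta(I_1^n) \subset \sum_{i+j=n} I_1^i \otimes I_1^j$ because $\Delta(f) - f\otimes 1 - 1\otimes f \in I_1 \otimes I_1$ for $f \in I_1$. Consequently $\mathrm{gr}(k[H]) = \bigoplus_n I_1^n/I_1^{n+1}$ is a graded Hopf algebra. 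When $H$ is irreducible and regular at the identity (the smooth linear algebraic group case), this is identified with $S^*(I_1/I_1^2) = S^*(\fg^\#)$; for the infinitesimal case $H = G_{(r)}$ one uses $k[G_{(r)}] = k[G]/I_1^{p^r}$ and obtains the truncated symmetric algebra. Either way, $\mathrm{gr}(H)$ is a vector group scheme (or its $r$-th Frobenius kernel).

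Next I would transfer the filtration to the standard Hochschild cochain complex $C^*(H,M)$ computing $H^*(H,M)$, producing a filtered complex whose associated spectral sequence has
\[
E_1^{i,j}(H) \;=\; H^{i+j}(\mathrm{gr}(H),k)_i \otimes M,
\]
with the subscript $i$ recording filtration grading. Convergence is automatic in the truncated case $H = G_{(r)}$ since $I_1^{p^r}=0$ bounds the filtration, and for the smooth case follows from a completeness argument combined with the bounded-below structure of the cochain complex.

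The explicit form (\ref{summands}) of the $E_1$-page reduces, via K\"unneth and $\fg \cong \bG_a^{\dim \fg}$ as vector groups, to $H^*(\bG_a,k) = S^*(V^{(1)}[2]) \otimes \Lambda^*(V[1])$ from (\ref{Ga-coh}). Each generator $y_s$ is represented by a cochain built from $t^{p^{s-1}}$, placing it in filtration degree $p^{s-1}$, while its Bockstein $x_s$ sits in filtration degree $p^s$; using $F^*(y_s) = y_{s+1}$ together with the scalar action $c^*(y_s) = c^{p^{s-1}}y_s$, $c^*(x_s) = c^{p^s}x_s$, one recognizes $x_n$ as living in $\fg^{\#(n)}[2]$ and $y_n$ in $\fg^{\#(n-1)}[1]$. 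Summing filtration and cohomological contributions over all tensor factors produces exactly $i = \sum_n(a_n p^n + b_n p^{n-1})$ and $i+j = \sum_n(2a_n + b_n)$. For $G_{(r)}$, the truncation kills all generators of filtration degree $\geq p^r$, which translates to $y_s = x_s = 0$ for $s > r$ and thus $a_n = b_n = 0$ for $n > r$.

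The main technical obstacle is the simultaneous bookkeeping of three gradings, namely cohomological degree, filtration degree, and Frobenius twist index, together with verifying compatibility of the $T$-equivariant structure under these identifications. A secondary point is the role of the hypothesis $p \neq 2$: it is needed so that the graded-commutative structure splits cleanly into a polynomial part on the even-degree classes and an exterior part on the odd-degree classes, rather than collapsing as happens at $p = 2$ where $\beta(y_s)$ and $y_s^2$ coincide.
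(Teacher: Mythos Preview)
The paper does not supply a proof of this proposition: it is quoted verbatim from Andersen--Jantzen \cite{AJ} and Jantzen \cite[\S 9]{J}, and then used as a black box (notably in Propositions~\ref{prop:1-dim}, \ref{prop:AJfiltGamma3}, and Theorem~\ref{thm:inj}). So there is no ``paper's own proof'' to compare against; what you have written is a sketch of the argument one finds in the cited references, and on the whole it is accurate.

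Two small corrections. First, your claim that $I_1^{p^r}=0$ in $k[G_{(r)}]$ is not quite right: the defining ideal of $G_{(r)}$ is generated by $p^r$-th \emph{powers} of elements of $I_1$, not by $I_1^{p^r}$, so for $G$ of dimension $d$ the maximal ideal has nilpotency index roughly $d(p^r-1)+1$ rather than $p^r$. The conclusion you want---that the filtration is bounded and convergence is automatic---still holds, just with a different bound. Second, the identification of $\mathrm{gr}(k[H])$ with a symmetric algebra (hence $\mathrm{gr}(H)$ with a vector group) uses smoothness of $H$ at the identity; for the Frobenius kernel $G_{(r)}$ one gets the truncated version, as you say. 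The proposition's phrase ``vector group scheme $gr(H)$'' for a general irreducible $H$ is being a bit loose, but in every case the paper actually uses (quotients of $U_J$ and their Frobenius kernels) the description goes through exactly as you outline.
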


The following theorem shows for a large class of unipotent algebraic groups that the 
rational cohomology equals the ``continuous cohomology", thereby motivating the study of this
continuous cohomology.

\begin{thm}
\label{thm:inj}
Let $G$ be a simple algebraic group provided with a choice of Borel subgroup $B \subset G$
with maximal torus $T$ and unipotent radical $U$.
Let $U_1 \subset U$ be $T$-stable closed subgroup, $U_2 \subset U_1$ a $T$-stable, normal 
closed subgroup of $U_1$, and consider  $V \ \equiv \ U_1/U_2$.  The natural map
$$H^*(V,k) \ \to \ \varprojlim_r H^*(V_{(r)},k)$$
is an isomorphism.
\end{thm}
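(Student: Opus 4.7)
The plan is to exploit the inherited $T$-action on $V$ to reduce the theorem to a weight-by-weight stabilization statement at the level of the bar cochain complex. First I would observe that because $U_1\subset U$ and $U_2 \subset U_1$ are $T$-stable (and $U_2$ is normal in $U_1$), the quotient $V = U_1/U_2$ inherits a natural $T$-action; hence $H^*(V,k)$ and each $H^*(V_{(r)},k)$ become rational $T$-modules, the restriction maps are $T$-equivariant, and every cohomology class is a finite sum of $T$-weight vectors. The bar cochain complex $C^n(V,k) = k[V^n]$ is a rational $T$-module via the diagonal action on $V^n$, the differentials are $T$-equivariant, and the surjection $C^\bullet(V,k) \twoheadrightarrow C^\bullet(V_{(r)},k)$ is $T$-equivariant; analogous statements hold for the $\lambda$-weight subcomplex $C^\bullet(V,k)_\lambda$ for each $T$-weight $\lambda$.

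Second, I would invoke the structure of $V$ as a $T$-scheme. Since $V$ is a $T$-stable unipotent subquotient of $U$, a root-space decomposition of $\fu_1/\fu_2$ yields a $T$-equivariant identification $V \cong \bA^m$ (for instance via successive root-subgroup coordinates as in Proposition \ref{prop:ABS}), so that $k[V] \cong k[x_1,\dots,x_m]$ with each $x_i$ a $T$-weight vector whose weight is (the negative of) a positive root $\alpha_i$. Because these weights all lie in a strict half-plane of the root lattice, each weight space $k[V]_\lambda$ is finite-dimensional: the monomials $\prod x_i^{a_i}$ of weight $\lambda$ correspond to non-negative integer solutions of $\sum a_i\alpha_i = -\lambda$, which are finitely many and have each $a_i$ bounded. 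The same argument applied to $V^n$ shows $k[V^n]_\lambda$ is finite-dimensional, and the natural surjection $k[V^n]_\lambda \twoheadrightarrow k[V_{(r)}^n]_\lambda$ becomes an isomorphism once $p^r$ exceeds the largest exponent appearing in any weight-$\lambda$ monomial on $V^n$. Denote the smallest such $r$ by $r_0(n,\lambda)$.

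Third, I would take cohomology of the weight-$\lambda$ subcomplex. For $r \geq \max(r_0(n,\lambda),\, r_0(n+1,\lambda))$ the inclusion $C^\bullet(V,k)_\lambda \hookrightarrow C^\bullet(V_{(r)},k)_\lambda$ is an isomorphism in degrees $n$ and $n+1$, so it induces
\[
H^n(V,k)_\lambda \;\xrightarrow{\sim}\; H^n(V_{(r)},k)_\lambda,
\]
and in particular $H^n(V,k)_\lambda \xrightarrow{\sim} \varprojlim_r H^n(V_{(r)},k)_\lambda$. Since $H^n(V,k) = \bigoplus_\lambda H^n(V,k)_\lambda$ as rational $T$-modules, summing over $\lambda$ identifies $H^*(V,k)$ with $\varprojlim_r H^*(V_{(r)},k)$ taken in the category of rational $T$-modules (equivalently, the sum of the weight-wise inverse limits, which is the rational socle of the naive vector-space limit).

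The main obstacle is precisely the interpretation of the inverse limit. In the category of plain $k$-vector spaces the limit equals the \emph{product} of the weight-wise limits, which strictly contains the direct sum — as one already sees for $V = \bG_a = U_\alpha$, where the naive limit allows formal infinite sums $\sum_s c_s y_s$ absent from $H^1(\bG_a,k)$. The substance of the argument is therefore the weight-finiteness of $k[V^n]$, which requires $V$ to be a $T$-stable subquotient of a unipotent radical (rather than an abstract unipotent group) so that its coordinate algebra is a polynomial algebra on $T$-weight generators with weights in a strict cone; this is what ensures every class in any $H^n(V_{(r)},k)$ sits in a finite-dimensional weight space that lifts compatibly to $H^n(V,k)$.
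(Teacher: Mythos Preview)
Your argument is correct and follows the same overall strategy as the paper --- show that for each fixed $T$-weight $\omega$ the restriction map becomes an isomorphism for $r$ sufficiently large --- but you implement it with a different tool. The paper works with the Andersen--Jantzen spectral sequence: it observes that the $\omega$-weight piece of ${}^{AJ}E_1^{*,*}(V)$ involves only tensor factors $S^*(\fv^{\#(i)})$, $\Lambda^*(\fv^{\#(j)})$ with $i,j$ bounded in terms of $\omega$, hence agrees with ${}^{AJ}E_1^{*,*}(V_{(r)})_\omega$ once $p^{r-1}$ exceeds the coefficients of $\omega$ in simple roots. You instead work directly with the Hochschild (bar) complex $C^n(V,k)=k[V^n]$ and use the fact that $k[V]$ is a polynomial algebra on $T$-eigenvectors whose weights lie in the strict positive cone, so each $k[V^n]_\omega$ is finite-dimensional with uniformly bounded exponents. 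Your route is more elementary (no spectral sequence), while the paper's route makes the bound on $r$ slightly more transparent and extends immediately to coefficients $M$ (as in the subsequent corollary). One small point: for the isomorphism on $H^n$ you also need the iso on $C^{n-1}_\omega$, but since the exponent bound depends only on $\omega$ and not on $n$, your $r_0(n,\lambda)$ is in fact independent of $n$ and this is harmless.

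Your final paragraph on the inverse-limit interpretation is well taken and in fact more careful than the paper: as you note, the weight-wise isomorphisms yield $H^*(V,k)\cong\bigoplus_\omega\varprojlim_r H^*(V_{(r)},k)_\omega$, which is the inverse limit in rational $T$-modules, whereas the vector-space limit is the full product. The paper's concluding sentence elides this distinction.
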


\begin{proof}
Let $0 \not= \zeta \in H^d(V,k)$ be a $T$-eigenvector of weight 
 $\omega = \sum_{m=1}^\ell w_m\alpha_m, w_m \geq 0$, an 
element in the positive cone of the root lattice for $\fv = Lie(V)$; here, $\alpha_1,\ldots,\alpha_\ell$ are the 
simple roots determined by $B \subset G$.  One verifies by inspection that
the restriction map ${}^{AJ}E_1^{*,*}(V) \ \to \ {}^{AJ}E_1^{*,*}(V_{(r)})$ is an isomorphism of $\omega$-weight spaces
$({}^{AJ}E_1^{*,*}(V))_\omega \ \to \ ({}^{AJ}E_1^{*,*}(V_{(r)})_\omega$ provided that $p^{r-1}$ is greater than any 
of $w_1,\ldots,w_\ell$.  Namely, using (\ref{summands}), we can see that this condition on each $w_i$ implies that there can be
no contribution from a tensor factor in  
$S^{a_i}(\fv^{\#(i)}[2])$ or in $\Lambda^{b_j}(\fv^{\#(j-1}[1])$ for $i > r$ or for $j  > r+1$.
Observe that the spectral sequences $\{ {}^{AJ}E_s^{i,j}(V), s \geq 1\}$ and $\{ {}^{AJ}E_s^{i,j}(V_{(r)}), \ s \geq 1\}$ 
split (additively) as a direct sum of spectral sequences indexed by the weights in the
positive cone of the root lattice for $\fv$.  

This implies that the restriction map induces an isomorphism $(H^*(V,k))_\omega \to (H^*(V_{(r)},k))_\omega$ 
whenever $p^{r-1}$ is greater than $max\{ w_i \}$, the maximum of $w_1,\ldots,w_\ell$ in the expression for $\omega$.  
Thus, the restriction map $(H^*(V,k))_\omega \to \varprojlim_r (H^*(V_{(r)},k))_\omega$ is an isomorphism for 
all weights $\omega$  in the positive cone of the root lattice for $\fv$,
so that $H^*(V,k) \ \to \ \varprojlim_r H^*(V_{(r)},k)$ is also an isomorphism.
\end{proof}

We easily verify that the injectivity statement of Theorem \ref{thm:inj} extends to cohomology of $V$ with coefficients
in a finite dimensional $V$-module having a compatible torus action.

\begin{cor}
Retain the notation of Theorem \ref{thm:inj} and let $M$ be a finite dimensional rational $V \rtimes T$-module.
Then the natural map
$$H^*(V,M) \ \to \ \varprojlim_r H^*(V_{(r)},M)$$
is injective.  

Moreover, if  \ $\zeta_r \in H^d(V_{(r)},M)$ is the restriction of some $\zeta_s \in H^*(V_{(s)},M)$
for all $s \geq r$, then there exists some $\zeta \in H^d(V,M)$ which restricts to $\zeta_r$.

\end{cor}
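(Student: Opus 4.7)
The plan is to reduce the corollary to the trivial-coefficient statement of Theorem \ref{thm:inj} by filtering $M$. Since $V$ is unipotent, any finite-dimensional rational $V$-module $M$ admits a filtration $0 = M_0 \subset M_1 \subset \cdots \subset M_n = M$ by $V$-submodules with each successive quotient $M_i/M_{i-1}$ isomorphic to the trivial module $k$; this is a standard consequence of Lie--Kolchin, obtained by iteratively extracting a one-dimensional $V$-fixed subspace from $M/M_i$. I would then induct on the length $n$, with base case $M = k$ being Theorem \ref{thm:inj}, which already gives both the injectivity and the lifting claim.

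For the inductive step, given a short exact sequence $0 \to M' \to M \to k \to 0$ with the two claims of the corollary known for $M'$, I would assemble the cohomology long exact sequences for $V$ and for each $V_{(r)}$ into a commutative ladder with vertical restriction maps. To prove injectivity for $M$: if $\zeta \in H^d(V, M)$ restricts to zero in every $H^d(V_{(r)}, M)$, its image in $H^d(V, k)$ has zero restrictions and thus vanishes by Theorem \ref{thm:inj}, so $\zeta = i(\eta)$ for some $\eta \in H^d(V, M')$. Since $i(\eta_r) = \zeta_r = 0$, each $\eta_r$ lies in $\Im(\partial_r \colon H^{d-1}(V_{(r)}, k) \to H^d(V_{(r)}, M'))$, and the task reduces to proving $\eta \in \Im(\partial)$, which will force $\zeta = i(\partial(\sigma)) = 0$ by exactness.

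The main obstacle is coherence: to invoke the lifting part of Theorem \ref{thm:inj} and produce $\sigma \in H^{d-1}(V, k)$ with $\partial(\sigma) = \eta$, I must first select preimages $\sigma_r \in H^{d-1}(V_{(r)}, k)$ of $\eta_r$ that assemble into a compatible family in $\varprojlim_r H^{d-1}(V_{(r)}, k)$. The ambiguity in each $\sigma_r$ is $\Im(\pi_r \colon H^{d-1}(V_{(r)}, M) \to H^{d-1}(V_{(r)}, k))$, and the discrepancies $\sigma_{r+1}|_{V_{(r)}} - \sigma_r$ lie precisely in this image; they must be killed by iteratively adjusting the $\sigma_r$'s. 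This adjustment invokes the inductive hypothesis applied to $M$ itself at cohomological degree $d-1$, so the argument requires a joint induction on composition length and cohomological degree, with the two claims of the corollary proved simultaneously at each stage. The lifting statement for $M$ is handled by an entirely analogous diagram chase: given $\zeta_r$ and its family of partial extensions $\zeta_s$, push to $H^d(V_{(s)}, k)$, lift to $H^d(V, k)$ by Theorem \ref{thm:inj}, and correct the discrepancy by a class coming from $M'$ via the inductive hypothesis; the coherent choice of corrections across the inverse system remains the essential difficulty.
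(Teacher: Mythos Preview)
Your approach is genuinely different from the paper's. The paper does not filter $M$ at all; instead it reruns the Andersen--Jantzen spectral sequence argument of Theorem~\ref{thm:inj} with coefficients in $M$. The key observation is that the $V$-action on the associated graded of $M$ (for the $I_1$-adic filtration) is trivial, so ${}^{AJ}E_1^{*,*}(V,M) = {}^{AJ}E_1^{*,*}(V,k)\otimes M$, and likewise for $V_{(r)}$. The same bookkeeping as in Theorem~\ref{thm:inj} then shows that the restriction of $E_1$-pages is an isomorphism in each fixed bidegree once $r$ is large, and the conclusion follows by comparison of spectral sequences. No diagram chase, no inverse-limit subtleties.

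Your filtration-and-five-lemma strategy can be made to work, but the resolution you propose for the coherence problem is not the right one. The obstruction to choosing a compatible family $(\sigma_r)$ lives in $\varprojlim_r^1 \operatorname{Im}(\pi_r)$, and invoking ``the inductive hypothesis for $M$ at degree $d-1$'' does not kill this: the lifting claim for $M$ only applies to classes that already extend to every $V_{(s)}$, and there is no reason the individual discrepancies $\sigma_{r+1}|_{V_{(r)}}-\sigma_r$ should have this property. The joint induction on degree and length, as you have sketched it, is circular at exactly this point.

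The actual fix is much simpler and does not require any induction on degree: each $H^d(V_{(r)},N)$ is finite-dimensional (since $kV_{(r)}$ is a finite-dimensional Hopf algebra and $N$ is finite-dimensional), so every inverse system in sight satisfies Mittag--Leffler and has $\varprojlim^1=0$. Then $\varprojlim_r$ applied to the long exact sequence for $0\to M'\to M\to k\to 0$ stays exact, and the five lemma (with Theorem~\ref{thm:inj} for $k$ and the inductive hypothesis, strengthened to full isomorphism, for $M'$) gives that $H^*(V,M)\to\varprojlim_r H^*(V_{(r)},M)$ is an isomorphism, which is stronger than the stated corollary. So your route is viable, but the essential ingredient is finite-dimensionality and Mittag--Leffler, not the degree induction.
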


\begin{proof}
We repeat the argument of the proof  of Theorem \ref{thm:inj} for A-J spectral sequences
$\{ {}^{AJ}E_s^{i,j}(V,M), s \geq 1\}$ and $\{ {}^{AJ}E_s^{i,j}(V_{(r)},M), s \geq 1\}$.  The action of 
$V$ on the associated
graded group of $M$ is trivial, so that the $E_1$ terms are obtained from those for coefficients 
equal to $k$ by tensoring with $M$.  These spectral sequences now split as a direct sum of
spectral sequences indexed by weights given as the sum of a weight of $M$ and 
an element in the positive cone of the root lattice for $\fv$.  As in the proof of Theorem \ref{thm:inj},
we conclude that the restriction map ${}^{AJ}E_1^{*,*}(V,M) \ \to \ {}^{AJ}E_1^{*,*}(V_{(r)},M)$ is an isomorphism of 
$\omega$-weight spaces
$({}^{AJ}E_1^{*,*}(V,M))_\omega \ \to \ ({}^{AJ}E_1^{*,*}(V_{(r)},M))_\omega$ provided that $p^{r-1}$ does not divide 
the coefficient of the linear expansion of any weight of the form $w+w^\prime$, where $w^\prime$ 
is a weight of $M$.    The remainder of the proof is a repetition of that of Theorem \ref{thm:inj}.
\end{proof}

In order to  investigate how the map $\phi_{GL_N,r}$ in (\ref{def:phi-GL}) behaves as $r$ increases, we 
introduce in the next proposition the map $\rho$.

\begin{prop}
\label{prop:bracket}
Define the degree preserving map of graded $k$-algebras 
\begin{equation}
\label{rho-phi-gl}
\rho: S^*(\oplus_{\ell = 0}^{r-1} (\gl_N^{(r)\#}[2p^{r-\ell-1}]))  \to \ S^*(\oplus_{\ell = 0}^{r-2} (\gl_N^{(r-1)\#}[2p^{r-\ell-2}]))
\end{equation}
by sending  $X^{s,t}(\ell) \in \gl_N^{(r)\#}[2p^{r-\ell-1}]$ to 
the $p$-th power $(X^{s,t}(\ell))^p \in S^p(\gl_N^{(r-1)\#}[2p^{r-\ell-2}])$
if $\ell < r-1$ and to 0 if $\ell = r-1$.
Then $\rho$ fits in the $GL_N$-equivariant
commutative square
\begin{equation}
\label{comm:bracket}
\begin{xy}*!C\xybox{%
\xymatrix{
S^*(\oplus_{\ell = 0}^{r-1} (\gl_N^{\#(r)}[2p^{r-\ell-1}])) \ar[d]_{\rho} \ar[r]^-{\phi_{GL_N,r}} & H^\bu(GL_{N(r)},k) \ar[d]^{res}\\
S^*(\oplus_{\ell = 0}^{r-2} (\gl_N^{\#(r-1)}[2p^{r-\ell-2}])) \ar[r]_-{\phi_{GL_N,r-1}} & H^\bu(GL_{N(r-1)},k). }
}\end{xy}
\end{equation}
\end{prop}

\begin{proof}
We first show that $\phi_{GL_N,r}(X^{s,t}(0)) \in H^{2p^{r-1}}(GL_{N(r)},k)$ restricts to the $p$-th power
of $\phi_{GL_N,r-1}(X^{s,t}(0)) \in H^{2p^{r-2}}(GL_{N(r-1)},k)$.  By  \cite[3.4]{SFB1}, both of these
classes restrict to the $p^{r-1}$-st power of the image of $\phi_{GL_N,1}(X^{s,t}(0))$
in $H^2(GL_{N-1(1)},k)$.   Thus, the outer rectangle and the lower square of the following diagram
commutes:
\begin{equation}
\label{gl-diag}
\begin{xy}*!C\xybox{%
\xymatrix{
\gl_N^{\#(r)}[2p^{r-1}] \ar[d]_{\rho} \ar[r]^{e_r} & H^{2p^{r-1}}(GL_{N(r)},k) \ar[d]^{res} \\
S^p(\gl_N^{\#(r-1)}[2p^{r-2}]) \ar[d]_{(-)^{\rho^{r-2}}} \ar[r]^{e_{r-1}} & H^{2p^{r-1}}(GL_{N(r-1)},k) \ar[d]^{res} \\
S^{p^{r-1}}(\gl_N^{\#(1)}[2]) \ar[r]_{e_1} &  H^{2p^{r-1}}(GL_{N(1)},k) }
}\end{xy}
\end{equation}

The images in $H^{2p^{r-1}}(GL_{N(r-1)},k)$ of the two compositions in the upper square of (\ref{gl-diag}) are 
each irreducible $GL_N$-modules (copies of $(\gl_N^{\#(r)}[2p^{r-1}])/(k\cdot Tr^{(r)})$) which restrict non-trivially 
to $H^{2p^{r-1}}(GL_{N(1)},k)$.   Using the form of the $E_1$-term of the A-J spectral sequence (\ref{specAJ}),
we conclude that there is a unique copy of $(\gl_N^{\#(r)}[2p^{r-1}])/(k\cdot Tr^{(r)})$
in ${}^{AJ}E_1^{*,*}(GL_{N(r-1)})$ of cohomological degree $2p^{r-1}$, so that these images
are equal.  The functoriality of  (\ref{specAJ})
with respect to $GL_{N(1)} \to GL_{N(r)}$ now implies that 
the upper square of (\ref{gl-diag}) must also commute.

By definition of $e_{r-\ell}^{(\ell)}$ as the pull-back via $F^\ell: GL_N \to GL_N$ of $e_{r-\ell}$, we have the commutativity of
the following square
\begin{equation}
\label{pull-back-er}
\begin{xy}*!C\xybox{%
\xymatrix{
\gl_N^{\#(r-\ell)}[2p^{r-\ell-1}] \ar[d]_{(-)^{(\ell)}} \ar[r]^-{e_{r-\ell}} & H^{2p^{r-\ell-1}}(GL_{N(r-\ell)},k) \ar[d]^{(F^\ell)^*} \\
\gl_N^{\#(r)}[2p^{r-\ell-1}] \ar[r]^-{e_{r-\ell}^{(\ell)}} & H^{2p^{r-\ell-1}}(GL_{N(r)},k).}
}\end{xy}
\end{equation}
Consequently, pulling back via $F^{\ell}$ the commutative upper square of (\ref{gl-diag}) 
with $r$ replaced by $r-\ell$ 
determines the following commutative square  for each $\ell, 0 \leq \ell < r$:
\begin{equation}
\label{gl-diag-ell}
\begin{xy}*!C\xybox{%
\xymatrix{
\gl_N^{\#(r)}[2p^{r-\ell-1}] \ar[d]_{\rho} \ar[r]^{e_{r-\ell}^{(\ell)}} & H^{2p^{r-\ell-1}}(GL_{N(r)},k) \ar[d]^{res} \\
S^p(\gl_N^{\#(r-1)}[2p^{r-\ell-2}]) \ar[r]^{e_{r-\ell-1}^{(\ell)}} & H^{2p^{r-\ell -1}}(GL_{N(r-1)},k). }
}\end{xy}
\end{equation}

The proposition now follows since the maps of (\ref{comm:bracket}) are maps of $k$-algebras and
the commutativity of (\ref{gl-diag-ell}) implies the commutativity of (\ref{comm:bracket}) on generators.
\end{proof}

The proof of the following lemma (including the definitions of $\rho_G$ and $\rho_{U_J}$) is immediate from the
definitions.

\begin{lemma}
\label{lemma:other-rho}
Let $U_J \subset G$ satisfy the hypotheses of Theorem \ref{thm:coh-exist}.   Define 
\begin{equation}
\label{rho-phi-g}
\rho_G: S^*(\oplus_{\ell = 0}^{r-1} (\fg^{\#(r)}[2p^{r-\ell-1}]))  \to \ S^*(\oplus_{\ell = 0}^{r-2} (\fg^{\#(r-1)}[2p^{r-\ell-2}]))
\end{equation}
by sending  $X \in \fg^{\#(r)}[2p^{r-\ell-1}]$ to $X^p \in S^p(\fg^{\#(r-1)}[2p^{r-\ell-2}])$
if $\ell < r-1$ and to 0 if $\ell = r-1$.  Define
$\rho_{U_J}: S^*(\oplus_{\ell = 0}^{r-1} (\fu_J^{\#(r)}[2p^{r-\ell-1}]))  \to \ S^*(\oplus_{\ell = 0}^{r-2} (\fu_J^{\#(r-1)}[2p^{r-\ell-2}]))$
similarly.  Then $\rho, \rho_G, \rho_{U_J}$ fit in the following commutative diagram
\begin{equation}
\label{rho-compat}
\begin{xy}*!C\xybox{%
\xymatrix{
S^*(\oplus_{\ell = 0}^r(\fu_J^{\#(r)}[2p^{r-\ell-1}])) \ar[r]^{\tau_{U_J}^*} \ar[d]^{\rho_{U_J}} & 
S^*(\oplus_{\ell = 0}^r(\fg^{\#(r)}[2p^{r-\ell-1}]))
\ar[r]^{\tau_G^*} \ar[d]^{\rho_G} 
& S^*(\oplus_{\ell = 0}^r(\gl_N^{\#(r)}[2p^{r-\ell-1}])) \ar[d]_{\rho} \\
S^*(\oplus_{\ell = 0}^{r-1}(\fu_J^{\#(r-1)}[2p^{r-\ell-2}])) \ar[r]^{\tau_{U_J}^*}  & S^*(\oplus_{\ell = 0}^{r-1}(\fg^{\#(r-1)}[2p^{r-\ell-2}])) 
\ar[r]^{\tau_G^*}  & S^*(\oplus_{\ell = 0}^{r-1}(\gl_N^{\#{(r-1)}}[2p^{r-\ell-2}])). }
}\end{xy}
\end{equation}
\end{lemma}

The following extension of Proposition \ref{prop:bracket} to $G$ and $U_J$ now follows easily.

\begin{prop}
Retain the hypotheses and notation of Theorem \ref{thm:coh-exist}.  Then the following squares commute
\begin{equation}
\label{comm:rho-G}
\begin{xy}*!C\xybox{%
\xymatrix{
S^*(\oplus_{\ell = 0}^{r-1} (\fg^{\#(r)}[2p^{r-\ell-1}])) \ar[d]_{\rho_G} \ar[r]^-{\phi_{G,r}} & H^\bu(G_{(r)},k) \ar[d]^{res}\\
S^*(\oplus_{\ell = 0}^{r-2} (\fg^{\#(r-1)}[2p^{r-\ell-2}])) \ar[r]^-{\phi_{G,r-1}} & H^\bu(G_{(r-1)},k). }
}\end{xy}
\end{equation}
\begin{equation}
\label{comm:rho-U}
\begin{xy}*!C\xybox{%
\xymatrix{
S^*(\oplus_{\ell = 0}^{r-1} (\fu_J^{\#(r)}[2p^{r-\ell-1}])) \ar[d]_{\rho_{U_J}} \ar[r]^-{\phi_{U_J,r}} & H^\bu((U_J)_{(r)},k) \ar[d]^{res}\\
S^*(\oplus_{\ell = 0}^{r-2} (\fu_J^{\#(r-1)}[2p^{r-\ell-2}])) \ar[r]^-{\phi_{U_J,r-1}} & H^\bu((U_J)_{(r-1)},k). }
}\end{xy}
\end{equation}
\end{prop}

\begin{proof}
We readily verify that $\phi_{G,r}, \ \phi_{G,r-1}$ provide two faces of a commutative ``cube" with three other faces provided
by (\ref{comm:bracket}), the right square of (\ref{rho-compat}), and the square resulting from the functoriality of 
the restriction map with respect to $G \to GL_N$.    The remaining face is (\ref{comm:rho-G}).

The commutativity of (\ref{comm:rho-U}) is verified similarly, replacing $G \to GL_N$ by $U_J \to G$.
\end{proof}

As summarized in \cite[I.9.9]{J}, the natural map \ $H^*(G,M) \to \varprojlim_r H^*((G_{(r)},M)$ is an isomorphism
for all finite dimensional $G$-modules $M$; a key ingredient in the proof of that isomorphism is the fact that $H^*(G,k)$ 
is  isomorphic to $k$ (in degree 0).   Although $H^*(U_J,k)$ is non-trivial, we do have the following
vanishing result.

\begin{cor}
\label{trivial-limit}
Retain the hypotheses and notation of Theorem \ref{thm:coh-exist}.   Then the sub-algebra 
$$ \varprojlim_r im\{ \phi_{U_J,r}: S^*(\oplus_{\ell = 0}^{r-1} (\fu_J^{\#(r)}[2p^{r-\ell-1}])) \to H^*((U_J)_{(r)},k)  \}$$
of $  \varprojlim_r  H^*((U_J)_{(r)},k) \ \simeq \  H^*(U_J,k)$ is isomorphic to $k$ (in degree 0).
\end{cor}

\begin{proof}
The left exactness of $\varprojlim_r$ implies that 
$$ \varprojlim_r im\{ \phi_{U_J,r}: S^*(\oplus_{\ell = 0}^{r-1} (\fu_J^{\#(r)}[2p^{r-\ell-1}]))  \ \to \ H^*((U_J)_{(r)},k) \} $$
is a subalgebra of $\varprojlim_r  H^*((U_J)_{(r)},k)$ which is isomorphic to $H^*(U_J,k)$ by Theorem \ref{thm:inj}.

By the commutativity of (\ref{comm:bracket}), (\ref{comm:rho-G}), and (\ref{comm:rho-U}),
we observe that no homogeneous element of degree $d, 0 < d < p^s$,  in 
$S^*(\oplus_{\ell = 0}^{r-1} (\fu_J^{\#(r)}[2p^{r-\ell-1}]))$ lies in the image of $\rho^s$ for any $r > s$.
Consequently, there can not exist some non-zero element 
$\{ x_r \} \in  \varprojlim_r im\{ \phi_{U_J,r}: S^*(\oplus_{\ell = 0}^{r-1} (\fu_J^{\#(r)}[2p^{r-\ell-1}]))  
\ \to \ H^*((U_J)_{(r)},k) \} $ of degree $d$:  such an element $\{ x_r \}$ in the inverse limit must satisfy 
$\rho_{r^\prime - r}(x_{r^\prime}) = x_r$ for all $r^\prime \geq r \geq 0$ and would have to
have some $x_r \not= 0, r > s$; then  $\{ x_r \} $ would have to satisfy $x_{r^\prime} \not= 0$ for all $r^\prime \geq r$ (including $r^\prime = r+s$), contradicting the above observation.
\end{proof}

\vskip .1in

As we shall see in the next section, the image of the composition 
$$\varprojlim_r S^*(\oplus_{\ell = 0}^{r-1}((\fu_3/\gamma_2)^{\#(\ell+1)}[2])) \ \to \ \varprojlim_r H^*((U_3/\Gamma_2)_{(r)},k)\ \to \ \varprojlim_r H^*((U_3)_{(r)},k)$$
(which factors through $\varprojlim_r S^*(\oplus_{\ell = 0}^{r-1}(\fu_3^{\#(\ell+1)}[2])) \to \varprojlim_r H^*((U_3)_{(r)},k)$)
is highly non-trivial, where $\bG_a \simeq \Gamma_2  \subset U_3$ is the center of $U_3$.


\section{The map  $ \ol \eta_{U_J/\Gamma_3,r}: \ol S^*((U_J/\Gamma_3)_{(r)}) \to H^\bu((U_J/\Gamma_3)_{(r)},k)$}
\label{sec:UGamma3}

In this section, we consider groups of the form 
$U_J/\Gamma_3$, where $\Gamma_3 \equiv \Gamma_3(U_J)$ is the third stage of the 
descending central series of the unipotent radical of a parabolic subgroup $P_J \subset G$ of a simple 
algebraic group $G$.  
 Our primary tool is the Lyndon-Hochshield-Serre spectral sequence for a central extension (see
Proposition \ref{prop:UGamma3}) together with the action of the mod-$p$ Steenrod algebra on this 
spectral sequence.  

We recall from \cite{ABS} the description due to H. Azad, M. Barry, and G. Seitz
of the terms of the descending central series of the unipotent radical $U_J$ of $P_J \subset G$  
for some subset $J$ of the set of fundamental positive roots $\Pi$ of a chosen root system for $G$. 
For a positive root $\beta \in \Sigma^+ - \Sigma_J^+$ (where $\Sigma^+$ is the 
set of positive roots  for the root system of $G \supset B \supset T$ and $\Sigma_J^+$ is the set of
positive roots for the root system of $L_J = P_J/U_J \supset T_J$), we adopt the terminology of \cite{ABS}:  
write $\beta = \beta_J + \beta_{J^\prime}$ where $\beta_J$ is a sum $\sum_i c_i \alpha_i$ with each 
$\alpha_i \in J$ and $\beta_{J^\prime}$ is a sum $\sum_j d_j \alpha_j$ with each $\alpha_j \in \Pi-J$;
then the height of $\beta$ is defined to be $\sum_i c_i + \sum_j d_j$, the level of $\beta$ is defined to 
be $\sum_j d_j$ and the shape of $\beta$ is defined to be $\beta_{J^\prime}$.

\begin{prop}  (summary of \S 2 of \cite{ABS})  
\label{prop:ABS} 
Let $G$ be a simple algebraic group of adjoint type, and $P = P_J \subset G$ a parabolic
subgroup, $L_J$ its Levi factor, and $U_J$ its unipotent radical for some subset $J \subset \Pi$.
As usual, assume $p > 2$; for $G$ of type $G_2$, assume $p > 3$.  Consider the descending central series for $U_J$:
 $$\cdots \subset \Gamma_{v+1} = [U_J,\Gamma_v] \subset \cdots \subset 
\Gamma_2 = [U_J,U_J] \subset \Gamma_1 = U_J.$$

For any $v > 1$,  we have the central extension with a natural action of $L_J$:
 \begin{equation}
 \label{central-ext-J}
 1 \to \Gamma_v/\Gamma_{v+1} \to U_J/\Gamma_{v+1} \to U_J/\Gamma_v \to 1.
 \end{equation}
 The commutative group $\Gamma_v/\Gamma_{v+1}$ is a direct product of irreducible $L_J$-modules $V_{\mathcal S}$ indexed by 
 shapes $\mathcal S$ of level $v$; each $V_{\mathcal S}$ is $T$ isomorphic to a product of $U_{-\beta}$
 indexed by $\beta \in \Sigma^+ - \Sigma_J^+$ of shape $\mathcal S$ and level $v$, 
 where $U_{-\beta}$ is the root subgroup with $T$-weight
 $-\beta$; $V_{\mathcal S}$ is a high weight $L_J$-module with highest weight $-\beta_\cS^o$, where $\beta_\cS^o$
 is the unique root of minimal height and shape $\mathcal S$.
 
In particular, if $P_J$ is the minimal parabolic (i.e., equal to the given Borel subgroup $B \subset G$, corresponding to
$J = \emptyset$), then
 $\Gamma_v/\Gamma_{v+1}$ is $T$-isomorphic to  $\prod  U_{-\beta}$ where the product is indexed
 by $\beta \in \Sigma^+$ of level $v$.  
 \end{prop}

 In what follows, we shall denote $Lie(\Gamma_v(U_J))$ by $\gamma_v$.   As in Section \ref{sec:phiU}, $r$ will denote a 
 fixed (but arbitrary) positive integer.
 
 In order to fix notation and $T$-weights for Frobenius twists, we recall
the known computation of  $H^*(\bG_a,k)$ and $H^*(\bG_{a(r)},k)$ (see, for example, \cite[Ch I.4]{J} or \cite{CPSvdK}):  
there is a natural isomorphism of graded commutative $k$-algebras
 \begin{equation}
 \label{Ga-coh}
 S^*(V^{(1)}[2]) \otimes \Lambda^*(V[1]) \ \stackrel{\sim}{\to} \   H^*(\bG_a,k),
 \end{equation}
 where $V = H^1(\bG_a,k)$ is a countable $k$-vector space spanned by $\lambda_1,\lambda_2,\ldots \lambda_s \ldots $ with Frobenius action 
 $F^*(\lambda_s) = \lambda_{s+1}$ , $\Lambda^*(V([1])$ is the exterior algebra on $V$ placed in degree 1, and $S^*(V^{(1)}[2])$ is 
 the polynomial algebra on the Frobenius twist $V^{(1)}$ of $V$ placed in degree 2 spanned by $x_1, 
 x_2, \ldots, x_s, \ldots$, the Bocksteins of the $\lambda_i$.   The action of multiplication by $c \in k$ on $\bG_a$ induces an action on $H^*(\bG_a,k)$ given by
 $c^*(\lambda_i) = c^{p^{i-1}}\lambda_i, \ c^*(x_i) = c^{p^i}x_i$.  This indexing is that of  \cite{CPSvdK} and \cite[Thm 1.3]{SFB2}.
 We recall that the cohomology algebra $H^*(\bG_{a(r)},k)$ 
 of the $r^{th}$ Frobenius kernel $\bG_{a(r)}$ of $\bG_a$ can be identified with
 the quotient of $H^*(\bG_a,k)$ obtained by setting $\lambda_s = 0 = x_s$ for $s > r$.   This finitely generated cohomology algebra admits
 the natural action of the mod-p Steenrod algebra $\mathcal A_p$ (see \cite[1.7]{SFB2} for an explicit description of the action
 of the generators $\cP^i, \beta\cP^i$ of $\mathcal A_p$ on $H^*(\bG_a,k)$).
 
 Observe that $U_J/\Gamma_2$ is a product $\bG_a^{\times s}$ of copies of $\bG_a$, so that its cohomology and that
 of $(\bG_a^{\times s})_{(r)}$ are determined by the above computation and the  K\"unneth theorem.  
 In particular, we conclude that there is a natural injective map
\begin{equation}
\label{eta2}
\eta_{U_J/\Gamma_2,r}: S^*(\oplus_{\ell=0}^{r-1} (\fu_J/\gamma_2)^{\#(\ell+1)}[2]) \ \to \ H^*((U_J/\Gamma_2)_{(r)},k)
\end{equation}
with left inverse $H^*((U_J/\Gamma_2)_{(r)},k) \to S^*(\oplus_{\ell=0}^{r-1} (\fu_J/\gamma_2)^{\#(\ell+1)}[2])$ whose kernel
consists of elements with $p$-th power 0.

We designate $T$-eigenvector generators  for 
$$H^*(U_J/\Gamma_2,k) \ \simeq \ 
S^*(\oplus_{\ell = 0}^\infty (\fu_J/\gamma_2)^{\#(\ell+1)}[2]) \ \otimes \ \Lambda^*(\oplus_{\ell = 0}^\infty (\fu_J/\gamma_2)^{\#(\ell)}[1])$$
by $x_\alpha^{(\ell)}$ of  cohomological degree 2 and $T$-weight  $p^{\ell+1}\alpha$ and $y_\alpha^{(\ell)}$
of cohomological degree 1 and $T$-weight $p^{\ell}\alpha$; here, $\alpha $ ranges
over roots of  $U_J$ of level 1 and $\ell$ is an non-negative integer satisfying $0 \leq \ell$.
Similarly, we designate generators for 
$$H^*(\Gamma_{2}/\Gamma_{3},k) \ \simeq \ 
S^*(\oplus_{\ell = 0}^\infty ((\gamma_{2}/\gamma_{3})^{\#(\ell+1)})[2]) \ \otimes \ 
\Lambda^*(\oplus_{\ell = 0}^\infty (\gamma_2/\gamma_{3})^{\#(\ell)}[1]) $$
by $x_\beta^{(\ell)}$ of cohomological 2 and $y_{\beta}^{(\ell)}$ of cohomological degree 1, with $0 \leq \ell$ and with $\beta$
ranging over $T$-weights of $U_J$ of level $2$.
Consequently, 
\begin{equation}
\label{UGamma3-tensor}
E_2^{*,*}(U_J/\Gamma_3) \ = \ H^*(U_J/\Gamma_2,k) \otimes H^*(\Gamma_2/\Gamma_3,k)
\end{equation}
equals
$$ S^*(\oplus_{\ell = 0}^\infty (\fu_J/\gamma_3)^{\#(\ell+1)}[2]) \ 
\otimes \ \Lambda^*(\oplus_{\ell = 0}^\infty (\fu_J/\gamma_3)^{\#(\ell)}[1]),$$
where $E_2^{*,*}(U_J/\Gamma_3)$ is the spectral sequence considered in the following proposition.

The indexing we adopt (for example, in Proposition \ref{prop:UGamma3}) relates to the above indexing 
as follows for cohomology classes of $\bG_a$: $\lambda_i$ corresponds to $y_\alpha^{(\ell)}$ and 
$x_i$ corresponds to $x_\alpha^{(\ell)}$  with $\ell = i-1$.  
The summation $\sum_{\alpha + \alpha^\prime = \beta}$
indicates a sum of pairs of roots $\alpha, \alpha^\prime$ with the property that $[X_\alpha,X_{\alpha^\prime}] = X_\beta
\in \fu_J/\gamma_{v+1}$.

\begin{prop}
\label{prop:UGamma3}
Retain the notation and hypotheses of Proposition \ref{prop:ABS}.  
Consider the $T$-equivariant Lyndon-Hochshild-Serre spectral sequence 
\cite{HS} for the extension $1\to \Gamma_2/\Gamma_3 \to U_J/\Gamma_3 \to U_J/\Gamma_2 \to 1$: 
\begin{equation}
\label{specseqGamma3}
E_2^{a,b}(U_J/\Gamma_3) \  = \ H^a(U_J/\Gamma_2,k) \otimes H^b(\Gamma_2/\Gamma_3,k) \ 
\Rightarrow H^{a+b}(U_J/\Gamma_3,k).
\end{equation}

For any $\ell \geq 0$, $j \geq 0$, and any $\beta$ a weight of level 2:
\begin{enumerate}
\item
$$d_2^{0,1}(y_\beta^{(\ell)}) \ = \  \sum_{\alpha + \alpha^\prime = \beta} 
y_\alpha^{(\ell)} \wedge y_{\alpha^\prime}^{(\ell)} \in H^2(U_J/\Gamma_2,k).$$ 
\item
$ d_{2p^j+1}^{0,2p^j}((x_{\beta}^{(\ell)})^{p^j}) \ = \  \sum_{\alpha + \alpha^\prime = \beta} 
 \{ (x_{\alpha}^{(\ell)})^{p^j} \otimes 
y_{\alpha^\prime}^{(\ell+1+j)} - (x_{\alpha^\prime}^{(\ell)})^{p^j} \otimes y_{\alpha}^{(\ell+1+j)}\}$
is non-zero in $H^{2p^{j+1}+1}(U_J/\Gamma_2,k)$.  Thus, $(x_{\beta}^{(\ell)})^{p^j} \in H^{2p^j}(\Gamma_2/\Gamma_3,k)$
does not lie in the image of
$H^\bu(U_J/\Gamma_3,k)$.
\item
$\beta\cP^{p^{j}}(\sum_{\alpha + \alpha^\prime = \beta}    \{ (x_{\alpha}^{(\ell)})^{p^j} \otimes 
y_{\alpha^\prime}^{(\ell+1+j)} - (x_{\alpha^\prime}^{(\ell)})^{p^j} \otimes y_{\alpha}^{(\ell+1+j)}\})$ equals
\begin{equation}
\label{eqn:relnU3}
 \sum_{\alpha + \alpha^\prime = \beta}   \{ (x_{\alpha}^{(\ell)})^{p^{j+1}} \otimes 
x_{\alpha^\prime}^{(\ell+1+j)} - (x_{\alpha^\prime}^{(\ell)})^{p^{j+1}} \otimes x_{\alpha}^{(\ell+1+j)}\},
\end{equation}
non-zero in $H^{2p^{j+1}+2}(U_J/\Gamma_2,k)$.  The expression 
(\ref{eqn:relnU3}) maps to 0 in $H^{2p^{j+1}+2}(U_J/\Gamma_3,k)$.
\end{enumerate}
\end{prop}

\begin{proof}
Assume first that $U_J$ equals $U_3$ and consider the extensions
$$ (\bG_a)_{(i)}/ (\bG_a)_{(i-1)} \ \to \ (U_3)_{(i)}/ (U_3)_{(i-1)} \  \to \ 
(\bG_a^{\times 2})_{(i)}/ (\bG_a^{\times 2})_{(i-1)}.$$
Since  $(\bG_a)_{(i)}/ (\bG_a)_{(i-1)}$ lies in the commutator of $(U_3)_{(i)}/(U_3)_{(i-1)}$,
we conclude that the map
$H^1((U_3)_{(i)}/ (U_3)_{(i-1)},k) \to H^1((\bG_a)_{(i)}/ (\bG_a)_{(i-1)},k)$ is 0,
so that the differential $d_2^{0,1}$ must be non-zero for each of these extensions.  We thus
conclude using induction on $r$ and the fact that differentials commute with the
action of $T_3$ that $d_2^{0,1}(y_\beta^{(\ell)})$ is  a sum of the form given in (1)
with non-zero coefficients of each summand $y_\alpha^{(\ell)} \wedge y_{\alpha^\prime}^{(\ell)}$.
These coefficients must be equal for varying $\ell$ using functoriality with respect to 
Frobenius maps $U_{3(i)} \to U_{3(i+1)}$.  Defining our group schemes over $\bZ$,
we conclude these coefficients must be $\pm 1$.   We have chosen the ordering of the pairs $\alpha,\alpha^\prime$
so that these coefficients are all $+1$.

For a more general $U_J/\Gamma_3$, we consider a pair $\alpha, \alpha^\prime$ with 
$\alpha + \alpha^\prime = \beta$ and define the subgroup $R_{\alpha,\alpha^\prime}
\subset U_J/\Gamma_3$ to be the subgroup generated by the root subgroups $U_\alpha, U_{\alpha^\prime}$.
Thus, $R_{\alpha,\alpha^\prime} \simeq U_3$ with center $U_\beta$.  The functoriality of the LHS spectral
sequence implies that 
\begin{equation}
\label{square:d2}
\begin{xy}*!C\xybox{%
\xymatrix{
H^1(\Gamma_2/\Gamma_3,k) \ar[r] \ar[d]^{d_2^{0,1}} & H^1(U_\beta,k) \ar[d]^{d_2^{0,1}} \\
H^2(U_J/\Gamma_2,k) \ar[r] & H^2(U_\alpha\times U_{\alpha^\prime},k) 
}
}\end{xy}
\end{equation}
commutes, so that $d_2^{0,1}(y_\beta^{(\ell)})$ must be given by (1) plus additional terms.   Yet there
are no other eigenvectors of $T$-weight $p^\ell\beta$ in $H^2(U_J/\Gamma_2,k)$, so there can be no additional terms
in the formula for $d_2^{0,1}(y_\beta^{(\ell)})$ .

For $j = 0$, (2) follows from the equality $x_{\beta}^{(\ell)} = (\beta\cP^0)(y_\beta^{(\ell)})$, the fact that
$\beta\cP^0$ commutes with transgression (\cite{Kudo}), and the Cartan formula
$$\beta \cP^j(u\cdot v) \ = \  \sum_{i=0}^j ((\beta \cP^i(u)\cdot \otimes \cP^{j-i} (v) - (\cP^i)(u) \cdot (\beta \cP^{j-i}))(v)).$$
In particular, this tells us that 
$$\beta\cP^0(y^{(\ell)}_\alpha \wedge y_{\alpha^\prime}^{(\ell)}) = \beta\cP^0(y_\alpha^{(\ell)}) \otimes \cP^0(y_{\alpha^\prime}^{(\ell)}) 
- \cP^0(y_\alpha^{(\ell)}) \otimes \beta \cP^0(y_{\alpha^\prime}^{(\ell)}) =
x_\alpha^{(\ell)} \otimes y_{\alpha^\prime}^{(\ell+1)} - x_{\alpha^\prime}^{(\ell)} \otimes y_{\alpha}^{(\ell+1)}.$$
To prove (2) for $j > 0$,  we recall that $\cP^{p^i}$ applied to $(x_{\beta}^{(\ell)})^{p^i}$ equals $(x_{\beta}^{(\ell)})^{p^{i+1}}$.
Using the fact that the Steenrod action commutes with the differentials in the spectral sequence and repeated applications 
of the Cartan formula, we verify (2) by computing
$ d_{2p^j+1}^{0,2p^j}((x_{\beta}^{(\ell)})^{p^j})$, the result of applying $d_{2p^j+1}^{0,2p^j}$ to
$(\cP^{p^{j-1}} \circ \cdots \cP^1 \circ \beta\cP^0)(y_\beta^{(\ell)})$.
The fact that $ d_{2p^j+1}^{0,2p^j}((x_{\beta}^{(\ell)})^{p^j}) \not= 0$ follows 
from the explicit computation of $H^\bu(U_J/\Gamma_2,k)$.
Because some differential in the spectral sequence is non-vanishing on $(x_\beta^{(\ell)})^{p^j}$,
it  does not lie in the image of $H^\bu(U_J/\Gamma_3,k)$.

The computation of assertion (3) follows from the Cartan formula for $\beta \cP^{p^j}$
and the detailed description of $\cP^i$ and $\beta\cP^i$
given in \cite[1.7]{SFB2}.  The non-vanishing of (\ref{eqn:relnU3}) follows once again from the
explicit computation of $H^\bu(U_J/\Gamma_2,k)$.
\end{proof}

The restriction map for the embedding $(U_J/\Gamma_3)_{(r)} \to U_J/\Gamma_3$ determines a map
from the spectral sequence (\ref{specseqGamma3}) to the spectral sequence
\begin{equation}
\label{specseqGamma3r}
E_2^{a,b}((U_J/\Gamma_3)_{(r)}) \  = \ H^a((U_J/\Gamma_2)_{(r)},k) \otimes 
H^b((\Gamma_2/\Gamma_3)_{(r)},k) \ 
\Rightarrow H^{a+b}((U_J/\Gamma_3)_{(r)},k).
\end{equation}
considered in the next proposition.  On $E_2$-terms, this map sends $y_\beta^{(\ell)}, x_\beta^{(\ell)}, 
y_\alpha^{(\ell)}, x_\alpha^{(\ell)}$ to 0 for $\ell \geq r$.  

To exhibit the action of $T$, we retain the indexing of Proposition \ref{prop:UGamma3}, viewing  $E_2^{*,*}((U_J/\Gamma_3)_{(r)})$ 
as the tensor 
product 
\begin{equation}
\label{E**}
S^*(\oplus_{\ell = 0}^{r-1} (\fu_J/\gamma_2)^{\#(\ell+1)}[2]) \ \otimes 
\ \Lambda^*(\oplus_{\ell = 0}^{r-1}  (\fu_J/\gamma_2)^{\#(\ell)}[1]) \ \otimes 
\end{equation}
$$S^*(\oplus_{\ell = 0}^{r-1}  ((\gamma_{2}/\gamma_{3})^{\#(\ell+1)})[2]) \ \otimes \ 
\Lambda^*(\oplus_{\ell = 0}^{r-1} (\gamma_2/\gamma_{3})^{\#(\ell)}[1]).$$

\begin{prop}
\label{prop:UGamma3r}
Retain the notation and hypotheses of Proposition \ref{prop:ABS}, and consider 
the spectral sequence (\ref{specseqGamma3r}) for the central extension
\begin{equation}
\label{specseq:Gamma3r}
1 \ \to \ (\Gamma_2/\Gamma_3)_{(r)} \ \to \ (U_J/\Gamma_3)_{(r)} \ \to \ (U_J/\Gamma_2)_{(r)} \ \to \ 0.
\end{equation}
For any $\beta$ of level 2, 
\begin{enumerate}
\item
$(x_\beta^{(\ell)})^{p^j} \in S^*(\oplus_{\ell = 0}^{r-1} (\gamma_2/\gamma_3)^{\#(\ell+1)}[2]) \subset 
E_2^{0,*}((U_J/\Gamma_3)_{(r)})$ is 
a permanent cycle if and only if $ \ell+1+j \geq r$.
\item
For any $\ell , j \geq 0$ with $\ell+1+j < r$, 
\begin{equation}
\label{reln:S2}
\sum_{\alpha + \alpha^\prime = \beta,\alpha < \alpha^\prime}    \{ (x_{\alpha}^{(\ell)})^{p^{j+1}} \otimes 
x_{\alpha^\prime}^{(\ell+1+j)} - (x_{\alpha^\prime}^{(\ell)})^{p^{j+1}} \otimes x_{\alpha}^{(\ell+1+j)}\} 
\ = \ 0 \ \in H^{2p^{j+1}+2}((U_J/\Gamma_3)_{(r)},k).
\end{equation}
\item
For $U_J = U_3$ (with $\Gamma_3 = 1$), the $p^{r-\ell - j -1}$-st power of relation 
(\ref{reln:S2}) in $H^\bu((U_3)_{(r)},k)$ is the restriction to $(U_3)_{(r)}$
of the relation $X^{1,2}(\ell)\cdot X^{2,3}(\ell^\prime) - X^{2,3}(\ell) \cdot X^{1,2}(\ell^\prime)$  of 
Theorem \ref{thm:SFB-factor}.
\end{enumerate}
\end{prop}

\begin{proof}
The vanishing of $y_\alpha^{(\ell)}, \ell \geq r$ together with Proposition 
\ref{prop:UGamma3}(2) immediately implies that $(x_\beta^{(\ell)})^{p^j}$ is a permanent cycle
if $ \ell+1+j \geq r$.  Conversely, if $ \ell+1+j < r$, then Proposition \ref{prop:UGamma3} tells us that
$d_{2p^j}^{0,2p^j}$ does not vanish on $(x_\beta^{(\ell)})^{p^j}$.

To conclude (2), we first recall that Proposition \ref{prop:UGamma3}(2) implies that 
\begin{equation}
\label{bvalue}
\sum_{\alpha + \alpha^\prime = \beta}    \{ (x_{\alpha}^{(\ell)})^{p^j} \otimes 
y_{\alpha^\prime}^{(\ell+1+j)} - (x_{\alpha^\prime}^{(\ell)})^{p^j} \otimes y_{\alpha}^{(\ell+1+j)}\} 
 \ \in \ H^{2p^{j+1}}(U_J/\Gamma_2)_{(r)},k)
\end{equation}
is non-zero and in the image of a differential in the spectral sequence (\ref{specseqGamma3r}).  This implies
that (\ref{bvalue}) lies in the kernel of the inflation map.  One obtains the relation of (\ref{reln:S2}) by applying
$\beta \cP^j$ to (\ref{bvalue}) to obtain a class in $H^{2p^{j+1}+2}((U_J/\Gamma_2)_{(r)},k)$ whose inflation
gives (\ref{reln:S2}); since $\beta \cP^j$ commutes with the inflation map, we conclude the asserted vanishing.

Assertion (2) follows from Proposition \ref{prop:UGamma3}(2), since 
$\sum_{\alpha + \alpha^\prime = \beta}    \{ (x_{\alpha}^{(\ell)})^{p^{j+1}} \otimes 
y_{\alpha^\prime}^{(\ell+1+j)} - (x_{\alpha^\prime}^{(\ell)})^{p^{j+1}} \otimes y_{\alpha}^{(\ell+1+j)}\} $ is
a boundary and the restriction map commutes with the Bockstein.

The fact that the $p^{r-\ell -j-1}$-st power of (\ref{reln:S2}) equals the relation
$X^{1,2}(\ell)\cdot X^{2,3}(\ell^\prime) - X^{2,3}(\ell) \cdot X^{1,2}(\ell^\prime)$  of 
Theorem \ref{thm:SFB-factor} is immediate from the identification of $X^{1,2}(\ell)$ with $(x_{\alpha}^{(\ell)})^{p^{r-\ell-1}}$
and $X^{2,3}(\ell)$ with $(x_{\alpha^\prime}^{(\ell)})^{p^{r-\ell-1}}$.
\end{proof}

We view $ S^*(\oplus_{\ell=0}^{r-1} (\gamma_2/\gamma_3)^{\#(\ell+1)}[2])$ as a subalgebra of 
$E_2^{0,*}((U_J/\Gamma_3)_{(r)})$ using the identification (\ref{E**}).   Proposition \ref{prop:UGamma3r}(1)
tells us that the subalgebra
$$S^*(\oplus_{\ell=0}^{r-1} (\gamma_2/\gamma_3)^{\#(r)}[2p^{r-\ell-1}]) \ \subset \ 
S^*(\oplus_{\ell=0}^{r-1} (\gamma_2/\gamma_3)^{\#(\ell+1)}[2])$$
(defined as the image of the endomorphism $S^*(\oplus_{\ell=0}^{r-1}F^{r-\ell-1})$ on $S^*(\oplus_{\ell=0}^{r-1} (\gamma_2/\gamma_3)^{\#(\ell+1)}[2])$)
consists of permanent cycles in the spectral sequence (\ref{specseqGamma3r}).  
The following corollary tells us that this subalgebra is the intersection of the permanent cycles in 
$E_2^{*,*}((U_J/\Gamma_3)_{(r)})$ with $S^*(\oplus_{\ell=0}^{r-1} (\gamma_2/\gamma_3)^{\#(\ell+1)}[2])$.
What this corollary does {\it not} do is identify all permanent cycles of $E_2^{*,*}((U_J/\Gamma_3)_{(r)})$.

\begin{cor}
\label{cor:permanent3}
If $z \in S^*(\oplus_{\ell=0}^{r-1} (\gamma_2/\gamma_3)^{\#(\ell+1)}[2])$ does not lie in the subalgebra \\
$S^*(\oplus_{\ell=0}^{r-1} (\gamma_2/\gamma_3)^{\#(r)}[2p^{r-\ell-1}])$,  then there exists some 
differential of (\ref{specseqGamma3r})
which is non-zero on $z$.
\end{cor}

\begin{proof}
We employ the fact that the differentials in the spectral sequence are $k$-linear derivations.  Let
$\{ \beta_i, i\in I\}$ be the set of positive roots of $U_J$ of level 2.  Consider a monomial
$w = \prod_{i\in I} \prod_{\ell=0}^{r-1} (x_{\beta_i}^{(\ell)})^{n_{i,\ell}}$ with some $n_{i,\ell}$ not divisible by $p^{r-\ell-1}$.  
Let $p^j$ be the smallest power of $p$ such that $p^j$ divides some $n_{i,\ell}$, and
$p^{j+1}$ does not divide $n_{i,\ell}$.  
Then $d_{2p^j+1}^{0,\sum_\ell 2n_{i,\ell}}(w)$ is a sum of non-zero terms indexed by those $i,\ell$ with 
$p^j$ but not $p^{j+1}$ dividing $n_{i,\ell}$, each summand having a different $T$-weight.  Thus, $w$ is not a permanent cycle.

More generally, different such monomials have different $T$-weights, so that no non-trivial sum
of such monomials is a permanent cycle.
\end{proof}

\begin{note}
\label{note:compare}
Let $U_N \subset GL_N$ denote the subgroup of strictly upper triangular matrices.
The generators $X^{i,j}(\ell) \in S^*((\fu_N/\gamma_3)^{\#(r)}[2p^{r-\ell-1}])$ (as discussed following Proposition \ref{prop:basic})
correspond to $(x_{i,j}^{(\ell)})^{p^{r-\ell-1}} \in E_2^{*,*}((U_N/\Gamma_3)_{(r)})$ of Proposition \ref{prop:UGamma3r}.

We extend this notation, denoting generators of $S^*((\fu_J/\gamma_3)^{\#(r)}[2p^{r-\ell-1}])$ by $X^\beta(\ell) \in
S^*((\fu_J/\gamma_3)^{\#(r)}[2p^{r-\ell-1}])$.
These classes have the same weight and cohomological degree as the classes $(x_\beta^{(\ell)})^{p^{r-\ell-1}} \in 
E_2^{*,*}((U_J/\Gamma_3)_{(r)})$.   We shall see that the representative in $E_2^{*,*}((U_J/\Gamma_3)_{(r)})$
of $\eta_{U_J/\Gamma_3,r}(X^\beta(\ell)) \in H^*((U_J/\Gamma_3)_{(r)},k)$ is $(x_\beta^{(\ell)})^{p^{r-\ell-1}}$.
\end{note}

The uniqueness given in the following proposition enables us to specify the map $\eta_{U_J/\Gamma_3,r}$.
We are particularly interested in the special case $U_J/\Gamma_3 = U_3$.

\begin{prop}
\label{prop:1-dim}
Retain the notation and hypotheses of Proposition \ref{prop:ABS}.    Assume each root $\beta$ of $U_J$
of level 2 can be written uniquely as a sum $\alpha + \alpha^\prime$ of roots $U_J$ of level 1.
Then  there exists a unique $T$-equivariant $k$-linear map
$$\eta_r: (\fu_J/\gamma_3)^{\#(r)}[2p^{r-1}] \ \to \ H^{2p^{r-1}}((U_J/\Gamma_3)_{(r)},k)$$
which fits in the following commutative diagram
\begin{equation}
\label{diagram:eta}
\begin{xy}*!C\xybox{%
\xymatrix{
(\fu_J/\gamma_2)^{\#(r)}[2p^{r-1}] \ar[r] \ar[d] & (\fu_J/\gamma_3)^{\#(r)}[2p^{r-1}] \ar[r] \ar[d]^{\eta_r} 
& (\gamma_2/\gamma_3)^{\#(r)}[2p^{r-1}] \ar[d]\\
H^{2p^{r-1}}((U_J/\Gamma_2)_{(r)},k) \ar[r] & H^{2p^{r-1}}((U_J/\Gamma_3)_{(r)},k) \ar[r] &
H^{2p^{r-1}}((\Gamma_2/\Gamma_3)_{(r)},k).
}
}\end{xy}
\end{equation}
Here, the left and right vertical maps are given by the inclusions $S^*((\fu_J/\gamma_2)^{\#(1)}[2]) \to 
H^\bu((U_J/\Gamma_2)_{(r)},k)$ and $S^*((\gamma_2/\gamma_3)^{\#(1)}[2]) \to 
H^\bu((\Gamma_2/\Gamma_3)_{(r)},k)$, the upper horizontal maps are 
the evident ones, the lower horizontal maps are those given by functoriality.

Furthermore, the map $\eta_r$ fits in a commutative square
\begin{equation}
\label{diagram:compat-eta}
\begin{xy}*!C\xybox{%
\xymatrix{
(\fu_J/\gamma_3)^{\#(r)}[2p^{r-1}] \ar[r]^{\eta_r} \ar[d] & H^{2p^{r-1}}((U_J/\Gamma_3)_{(r)},k) \ar[d] \\
(\fu_J)^{\#(r)}[2p^{r-1}] \ar[r]_{(\phi_{U_J,r})_|} & H^{2p^{r-1}}((U_J)_{(r)},k)  
}
}\end{xy}
\end{equation}
whose vertical maps are inflation maps and whose lower horizontal map  is the restriction of
the map $\phi_{U,r}$ of Theorem \ref{thm:coh-exist} with $U = U_J$ satisfying the hypotheses of this 
proposition.
\end{prop}

\begin{proof}
The existence of some  $\eta$ fitting in the commutative diagram (\ref{diagram:eta}) is 
implied by Proposition \ref{prop:UGamma3r}(1).
To prove the uniqueness of $\eta$, it suffices to verify for each root $\beta$ of $U_J$ of level 2 that the
$T$-weight space of $H^{2p^{r-1}}((U_J/\Gamma_3)_{(r)},k)$ of weight $p^r\beta$ is 1-dimensional.  This would imply
the uniqueness of the choice of  class $\eta(X^\beta(0)) \in H^{2p^{r-1}}((U_J/\Gamma_3)_{(r)},k)$ 
fitting in the commutative diagram (\ref{diagram:eta}).

We search in ${}^{AJ}E_1^{*,*}$ as given in (\ref{summands}) for $T$-weight vectors with $T$-weight $p^r\beta$ and
cohomology degree $2p^{r-1}$ other than $(x_\beta^{(0)})^{p^{r-1}} \in E_2^{0,2p^{r-1}}$.   Consider a simple
tensor of the specified weight and degree, in other words a monomial $z$ in $x$'s and $y$'s.  Because the 
weight of $y_\alpha^{(0)} \wedge y_{\alpha^\prime}^{(0)}$ is not divisible by $p$, this does not divide the monomial $z$.
None of the factors of the monomial $z$ of weight $p^r\beta$ and degree $2p^{r-1}$ can be of the form 
$x^{(\ell)}$ for $\ell > 1$ or for $y^{(\ell)}$ for $\ell > 2$ because such a factor would increase the weight too ``fast" with respect to 
increase of the resulting degree by either 2 or 1.  

Thus, the only allowable weight vectors of cohomology degree $2p^{r-1}$ are scalar multiples of $(x_\beta^{(0)})^{p^{r-1}}$
and $(x_\beta^{(0)})^{p^{r-1}-1} \otimes y_\alpha^{(1)} \wedge y_{\alpha^\prime}^{(1)}$. 
 Since $y_\beta^{(i)} \in E_2^{0,1}$ is a permanent cycle, we conclude that $d_2((x_\beta)^{(i)}) = 0$ (using the 
 fact that differentials in the spectral sequence commute with Bocksteins).    Consequently,
 the value of  the derivation $d_2^{0,2p^{r-1}-1}$ applied to 
$(x_\beta^{(0)})^{p^{r-1}-1} \otimes y_\beta^{(1)}$ equals   $(x_\beta^{(0)})^{p^{r-1}-1} \otimes y_{\alpha_1}^{(1)} \wedge 
y_{\alpha_2}^{(1)}$.  We conclude that the class of $(x_\beta^{(0)})^{p^{r-1}} \in E_\infty^{0,2p^{r-1}}$ spans 
the $p^r\beta$ weight space of degree $2p^{r-1}$ of $E_\infty^{0,2p^{r-1}}$.  This implies that 
the $p^r\beta$ weight space of 
$H^{2p^{r-1}}((U_J/\Gamma_3)_{(r)},k)$ is 1-dimensional. 

Finally, the commutativity of (\ref{diagram:compat-eta}) follows from the $T$-equivariance of $\eta$ and the uniqueness
assertion of Theorem \ref{thm:coh-exist}.
\end{proof}

\begin{defn}
\label{defn:S*}
Retain the notation and hypotheses of Propositions \ref{prop:1-dim}.  We define 
$S^*((U_J/\Gamma_v)_{(r)})$ to be
\begin{equation}
S^*(\oplus_{\ell=0}^{r-1} (\fu_J/\gamma_2)^{\#(\ell+1)}[2]) 
\otimes_{S^*(\oplus_{\ell=0}^{r-1} (\fu_J/\gamma_2)^{\#(r)}[2p^{r-\ell-1}])} 
S^*(\oplus_{\ell=0}^{r-1} (\fu_J/\gamma_v)^{\#(r)}[2p^{r-\ell-1}]).
\end{equation} 
In other words, $S^*((U_J/\Gamma_v)_{(r)})$ is the coproduct in the category of
commutative $k$ algebras of $S^*(\oplus_{\ell=0}^{r-1} (\fu_J/\gamma_2)^{\#(\ell+1)}[2]) $
and $S^*(\oplus_{\ell=0}^{r-1} (\fu_J/\gamma_v)^{\#(r)}[2p^{r-\ell-1}])$ over 
$S^*(\oplus_{\ell=0}^{r-1} (\fu_J/\gamma_2)^{\#(r)}[2p^{r-\ell-1}])$.  
The $T$-equivariant splitting $\fu_J/\gamma_v \ \simeq \ (\fu_J/\gamma_2) \oplus (\gamma_2/\gamma_v)$
gives the $T$-equivariant splitting
\begin{equation}
\label{Ssplit}
S^*((U_J/\Gamma_v)_{(r)}) \ \simeq \ S^*(\oplus_{\ell=0}^{r-1} (\fu_J/\gamma_2)^{\#(\ell+1)}[2]) \otimes
S^*(\oplus_{\ell=0}^{r-1} (\gamma_2/\gamma_v)^{\#(r)}[2p^{r-\ell-1}]).
\end{equation}
\end{defn}

\begin{defn}
\label{defn:eta}
Retain the notation and hypotheses of Propositions \ref{prop:ABS}.   We define 
\begin{equation}
\label{tildeg}
\tilde g: S^*(\oplus_{\ell=0}^{r-1} (\gamma_2/\gamma_3)^{\#(r)}[2p^{r-\ell-1}]) \ \to \ H^\bu((U_J/\Gamma_3)_{(r)},k)
\end{equation}
to be the map of $k$-algebras determined by  the $\ell$-th Frobenius twists \\
$\eta_{r-\ell}^{(\ell)}: (\fu_J/\gamma_3)^{\#(r)}[2p^{r-\ell-1}] \to H^{2p^{r-\ell-1}}((U_J/\Gamma_3)_{(r)},k)$
of the maps \\
$\eta_{r-\ell}:  (\fu_J/\gamma_3)^{\#(r-\ell)}[2p^{r-\ell-1}] \to H^{2p^{r-\ell-1}}((U_J/\Gamma_3)_{(r-\ell)},k)$
constructed in Proposition \ref{prop:1-dim} (with $r$ replaced by $r-\ell$).   

We define 
\begin{equation}
\label{tildef}
\tilde f: S^*(\oplus_{\ell=0}^{r-1} (\fu_J/\gamma_2)^{\#(\ell+1)}[2]) \ \to \ H^\bu((U_J/\Gamma_2)_{(r)},k) \ \to \ 
H^\bu((U_J/\Gamma_3)_{(r)},k) 
\end{equation} 
to be the composition of $\eta_{U_J/\Gamma_2,r}$ given in (\ref{eta2}) and the inflation map.

We define
\begin{equation}
\label{formula:eta}
\eta_{U_J/\Gamma_3,r} = \tilde f \odot \tilde g: S^*((U_J/\Gamma_3)_{(r)}) \ \to \ H^\bu((U_J/\Gamma_3)_{(r)},k)
\end{equation}
to be the coproduct of $\tilde g$ and the map $\tilde f$ .
The maps $\tilde g$ and $\tilde f$  agree on $S^*((\oplus_{\ell=0}^{r-1} (\fu_J/\gamma_2)^{\#(r)}[2p^{r-\ell-1}])$ by 
commutativity of the left square of (\ref{diagram:eta}), so that $\eta_{U_J/\Gamma_3,r}$ is well defined.
\end{defn}

\vskip .1in

We give $S^*((U_J/\Gamma_3)_{(r)})$ the decreasing filtration whose subalgebra of level $2i$ is the coproduct of
$S^{\geq i}(\oplus_{\ell=0}^{r-1} (\fu_J/\gamma_2)^{\#(\ell+1)}[2])$ and 
$S^*((\oplus_{\ell=0}^{r-1} (\gamma_2/\gamma_3)^{\#(r)}[2p^{r-\ell-1}])$ over 
$S^{\geq i}(\oplus_{\ell=0}^{r-1} (\fu_J/\gamma_2)^{\#(r)}[2p^{r-\ell-1}])$.

\begin{prop}
\label{prop:eta3}
The map $\eta_{U_J/\Gamma_3,r}$ is a naturally defined 
$T$-equivariant map of filtered $k$-algebras, where we give 
$H^\bu((U_J/\Gamma_3)_{(r)},k)$ the LHS filtration of Proposition \ref{prop:UGamma3r} and  $S^*((U_J/\Gamma_3)_{(r)})$
the filtration described immediately above.

Furthermore,
\begin{equation}
\label{map:grS*}
gr(\eta_{U_J/\Gamma_3,r}) = f \odot  g: S^*((U_J/\Gamma_3)_{(r)}) \ \to \ gr\{ H^\bu((U_J/\Gamma_3)_{(r)},k)\}
\end{equation}
where $g: S^*((\oplus_{\ell=0}^{r-1} (\gamma_2/\gamma_3)^{\#(r)}[2p^{r-\ell-1}]) \to E_\infty^{0,*}$ is defined to be the 
composition of
$\tilde g$ with the natural map $H^\bu((U_J/\Gamma_3)_{(r)},k) \to E_\infty^{0,*}$ and $f: S^*((U_J/\Gamma_2)_{(r)})
\to H^\bu((U_J/\Gamma_2)_{(r)},k) \to E_\infty^{*,0}$ is the map whose composition with the natural inclusion
$E_\infty^{*,0} \to H^*((U_J/\Gamma_3)_{(r)},k)$ equals $\tilde f$.
\end{prop}

\begin{proof}
The map $\tilde f$ is induced by $U_J/\Gamma_3 \to U_J/\Gamma_2$ and thus is filtration preserving. 
Observe that $1 \otimes S^*(\oplus_{\ell = 0}^{r-1}(\gamma_2/\gamma_3)^{\#(r)}[2p^{r-\ell-1}]) 
\subset S^*((U_J/\Gamma_3)_{(r)})$ has filtration degree 0 as does $E_\infty^{0,*}$, so that $\tilde g$ 
is also a map of filtered algebras.  The mutliplicative property of these filtrations thus implies that
$\eta_{U_J/\Gamma_3,r}$ itself is a map of filtered algebras.

The identification of $gr\{\eta_{U_J/\Gamma_3,r}\}$ is verified by proving the commutativity of the following
two diagrams
\begin{equation}
\label{U3-short1}
\begin{xy}*!C\xybox{%
\xymatrix{ S^*(\oplus_{\ell = 0}^{r-1}(\fu_J/\gamma_2)^{\#(\ell+1)}[2]) \ar[r]^= \ar[d]^-{\eta_{U_J/\Gamma_2,r}} 
 & S^*(\oplus_{\ell = 0}^{r-1}(\fu_J/\gamma_2)^{\#(\ell+1)}[2]) \ar[d] \ar[r]  &   
S^*((U_J/\Gamma_3)_{(r)}) \ar[d]^-{ \eta_{U_J/\Gamma_3,r} }\\
H^\bu((U_J/\Gamma_2)_{(r)},k) \ar[r] & E_\infty^{*,0}((U_J/\Gamma_3)_{(r)}) \ar[r] & H^*((U_J/\Gamma_3)_{(r)},k) , 
 }
}\end{xy}
\end{equation}
 \begin{equation}
\label{U3-short2}
\begin{xy}*!C\xybox{%
\xymatrix{  
S^*((U_J/\Gamma_3)_{(r)}) \ar[d]^-{\eta_{U_J/\Gamma_3,r}} \ar[r] &  
S^*(\oplus_{\ell = 0}^{r-1}(\gamma_2/\gamma_3)^{\#(r)}[2p^{r-\ell-1}]) \ar[r] 
\ar[d]^{g} & S^*(\oplus_{\ell = 0}^{r-1}(\gamma_2/\gamma_3)^{\#(\ell+1)}[2]) \ar[d]^{\tilde g} \\
H^\bu((U_J/\Gamma_3)_{(r)},k)  \ar[r] & E_\infty^{0,*}((U_J/\Gamma_3)_{(r)}) \ar[r] & H^*((\Gamma_2/\Gamma_3)_{(r)},k)
}
}\end{xy}
\end{equation}
 By Proposition \ref{prop:UGamma3r}(2), the left square of  (\ref{U3-short1}) commutes.  By definition of 
$\eta_{U_J/\Gamma_3,r}$, the right square of  (\ref{U3-short1}) also commutes. 

The commutativity of the outer square of (\ref{U3-short2}) arises from the naturality of the restriction maps for
$\Gamma_2/\Gamma_3 \to U_J/\Gamma_3$.  The  commutativity of the two squares of (\ref{U3-short2})  
thus follows from the fact 
 that $S^*(\oplus_{\ell = 0}^{r-1}(\gamma_2/\gamma_3)^{\#(r)}[2p^{r-\ell-1}])$ is the image of the upper composition of
(\ref{U3-short2}) and the fact that $E_\infty^{0,*}((U_J/\Gamma_3)_{(r)})$ is the image of the restriction map 
$H^\bu((U_J/\Gamma_3)_{(r)},k)
\to H^\bu((\Gamma_2/\Gamma_3)_{(r)},k)$ by a standard property of Grothendieck spectral sequences.
\end{proof}

\begin{defn}
\label{defn:Q2}
We define $Q((U_J/\Gamma_2)_{(r)})$ to be the quotient of $S^*(\oplus_{\ell=0}^{r-1} (\fu_J/\gamma_2)^{\#(\ell+1)}[2])$
by the ideal generated by the elements of (\ref{reln:S2}), which we denote by $J_2$:
$$Q((U/\Gamma_2)_{(r)}) \ \equiv \ S^*(\oplus_{\ell=0}^{r-1} (\fu_J/\gamma_2)^{\#(\ell-1)}[2])/J_2.$$

We define $\ol S^*((U_J/\Gamma_3)_{(r)})$ to be the tensor product of $S^*((U_J/\Gamma_3)_{(r)})$ and
$Q((U_J/\Gamma_2)_{(r)})$ over  $S^*(\oplus_{\ell=0}^{r-1} (\fu_J/\gamma_2)^{\#(\ell+1)}[2])$.

The $T$-equivariant splitting $\fu_J/\gamma_3 \ \simeq \ (\fu_J/\gamma_2) \oplus (\gamma_2/\gamma_3)$
gives the $T$ equivariant splitting 
\begin{equation}
\label{Sbar-split}
\ol S^*((U_J/\Gamma_3)_{(r)}) \ \simeq \ Q((U_J/\Gamma_2)_{(r)}) \otimes 
S^*(\oplus_{\ell=0}^{r-1} (\gamma_2/\gamma_3)^{\#(r)}[2p^{r-\ell-1}]).
\end{equation}
\end{defn}
We view
$\ol S^*((U_J/\Gamma_3)_{(r)})$ as 
$$\ol S^*((U_J/\Gamma_3)_{(r)}) \ \simeq \ S^*((U_J/\Gamma_3)_{(r)}) /I_3$$
where $I_3 \subset S^*((U_J/\Gamma_3)_{(r)})$ is the ideal generated by 
the relations (\ref{reln:S2}).

\begin{prop}
\label{prop:oleta3}
The map $\eta_{U_J/\Gamma_3,r}$ of Proposition \ref{prop:eta3} factors through the quotient
$S^*((U_J/\Gamma_3)_{(r)}) \twoheadrightarrow \ol S^*((U_J/\Gamma_3)_{(r)})$, thereby determining 
the map of $k$-algebras
\begin{equation}
\label{map:olS*}
\ol \eta_{U_J/\Gamma_3,r} = \tilde f_Q \odot \tilde g: \ol S^*((U_J/\Gamma_3)_{(r)}) \ \to \ H^\bu((U_J/\Gamma_3)_{(r)},k),
\end{equation}
where $\tilde f_Q$ factors the map $\tilde f$ of (\ref{tildef}) via the natural surjection 
$S^*(\oplus_{\ell=0}^{r-1} (\fu_J/\gamma_2)^{\#(\ell+1)}[2])  \to Q((U_J/\Gamma_2)_{(r)})$.

The map $\ol \eta_{U_J/\Gamma_3,r}$ is a map of filtered algebras for the LHS filtration.
Moreover, 
$$gr\{\ol \eta_{U_J/\Gamma_3,r} \} \ = \ f_Q \odot g,$$
where $f_Q: Q((U_J/\Gamma_2)_{(r)}) \to E_\infty^{*,0}$ composed with the natural map $E_\infty^{*,0} \to
H^*((U_J/\Gamma_3)_{(r)},k)$ equals $\tilde f_Q$.
\end{prop}

\begin{proof}
Because $\eta_{U_J,3}(I_3) = 0$, $\eta_{U_J,3}$ induces $\ol \eta_{U_J,3}$.  Since $I_3$ is generated by 
elements of $J_2$, we conclude that $\ol \eta_{U_J,3}$ is given as indicated in (\ref{map:olS*}).
Observe that $Q^*((U_J/\Gamma_2)_{(r)}) = S^*(\oplus_{\ell=0}^{r-1} (\fu_J/\gamma_2)^{\#(\ell+1)}[2])/J_2$ 
inherits an LHS filtration  because $J_2$ is a filtered ideal.

Thus, to prove that $\ol \eta_{U_J/\Gamma_3,r}$ is filtration preserving, it suffices to observe that (1) $ \ol \eta_{U_J/\Gamma_3,r}$ 
restricted to $Q^*((U_J/\Gamma_2)_{(r)})$  is a filtered map of algebras for the LHS filtration 
because it is induced by the inflation map for $(U_J/\Gamma_3)_{(r)}\to (U_J/\Gamma_2)_{(r)}$.
(2) $\eta_{U_J/\Gamma_3,r}$  restricted to $S^*(\oplus_{\ell=0}^{r-1} (\gamma_2/\gamma_3)^{\#(\ell+1)}[2])$ 
is also a filtered map of algebras for the LHS filtration, as shown in the proof of 
Propositions \ref{prop:eta3}.

The identification of $gr\{\ol \eta_{U_J/\Gamma_3,r} \}$ follows from Proposition \ref{prop:eta3} and the 
fact that the LHS filtration on $\ol S^*((U_3)_{(r)})$ is induced by taking the quotient of the filtration for
$S^*((U_J/\Gamma_3)_{(r)})$ by an ideal which is of the form 
$J_2 \otimes S^*(\oplus_{\ell=0}^{r-1} (\gamma_2/\gamma_3)^{\#(r)}[2p^{r-\ell-1}])$.    
\end{proof}

The Andersen-Jantzen spectral sequence of Proposition \ref{prop:AJ}  admits a natural action of $T$
whose $T$-weights are identified using (\ref{summands}).
We envision that this spectral sequence should
enable the extension to $U_J/\Gamma_{v+1}$ of our considerations of
$U_J/\Gamma_3$.  

For $U_J/\Gamma_3$, we verify below that the elements of $S^*((U_J/\Gamma_3)_{(r)})$ are permanent 
cycles of the AJ spectral sequence.  It would be of interest to know whether or not all permanent cycles
in $S^*(\oplus_{\ell=0}^{r-1} (\fu_J/\gamma_{3})^{\#(\ell+1)}[2])$ belong to $S^*((U_J/\Gamma_3)_{(r)})$.

\begin{prop}
\label{prop:permanent3-AJ}
Retain the hypotheses and notation of Proposition \ref{prop:ABS}.  Every element 
$$z \in S^*((U_J/\Gamma_{3})_{(r)}) \subset S^*(\oplus_{\ell=0}^{r-1} (\fu_J/\gamma_{3})^{\#(\ell+1)}[2])  
\subset {}^{AJ}E_1^{*,*}((U_J/\Gamma_{3})_{(r)})$$
is a permanent cycle for the AJ spectral sequence. 

Moreover, the map \ $\eta_{U_J/\Gamma_3,r}: S^*((U_J/\Gamma_3)_{(r)}) \ \to \ H^\bu((U_J/\Gamma_3)_{(r)},k)$ \
sends $z \in S^*((U_J/\Gamma_3)_{(r)})$ to the cohomology class $\eta_{U_J/\Gamma_3,r}(z)$ 
represented at the $E_1^{**}$-page of the AJ spectral sequence by the image 
of $z$ in ${}^{AJ}gr(S^*((U_J/\Gamma_3)_{(r)}) \simeq S^*((U_J/\Gamma_3)_{(r)}) \subset {}^{AJ}E_1^{*,*}((U_J/\Gamma_3)_{(r)})$.
\end{prop}

\begin{proof}

The map $\eta_{U_J/\Gamma_2,r}: S^*((U_J/\Gamma_2)_{(r)}) \ \to \ H^\bu((U_J/\Gamma_2)_{(r)},k)$ of Definition 
\ref{defn:eta} 
is the tensor power of natural embeddings $S^*(H^2(\bG_{a(r)},k)) \to H^\bu(\bG_{a(r)},k)$ as in (\ref{Ga-coh})
and so can be identified with ${}^{AJ}gr(\eta_{U_J/\Gamma_2,r})$.  

Recall that $S^*((U_J/\Gamma_{3})_{(r)})$ is the tensor
product of its restrictions to \\
$S^*(\oplus_{\ell=0}^{r-1} (\fu_J/\gamma_2)^{\#(\ell+1)}[2])$ and
$S^*(\oplus_{\ell=0}^{r-1} (\gamma_2/\gamma_{3})^{\#(r)}[2p^{r-\ell-1}])$.
Using functoriality for the map $U_J/\Gamma_{3} \to U_J/\Gamma_2$
and multiplicativity of $\eta_{U/\Gamma_{3},r}$, we invoke a simple induction argument to conclude that it
suffices to prove the assertions for $z \in S^*(\oplus_{\ell=0}^{r-1} (\gamma_2/\gamma_{3})^{\#(r)}[2p^{r-\ell-1}])$.

Let $\beta$ be a root of $U_J$ of level $2$ and $0 \leq \ell < r$.  Consider $\eta_{U/\Gamma_{3},r-\ell}$ applied to
$(X^\beta(0))$ (corresponding to $(x_\beta^{(0)})^{p^{r-1}} \in E_1^{*,*}$ as in Notation \ref{note:compare}), giving an element
of $H^{2p^{r-\ell-1}}((U_J/\Gamma_{3})_{(r-\ell)},k)$ of weight $p^{r-\ell-1}\cdot \beta$.
An inspection of (\ref{summands}) verifies that the 
only possible representative in ${}^{AJ}E_1^{*,*}((U_J/\Gamma_{3})_{(r-\ell)})$
of this weight and degree is $(x_\beta^{(0)})^{p^{r-1-\ell}}$.  

The following square commutes
\begin{equation}
\label{commute:StoSbar}
\begin{xy}*!C\xybox{%
\xymatrix{
S^*((U_J/\Gamma_{3})_{(r-\ell)}) \ar[d]_-{\eta_{U/\Gamma_{3},r-\ell}} \ar[r]^-{F^{\ell*}} & 
\ol S^*((U_J/\Gamma_{3})_{(r)}) \ar[d]^-{\eta_{U/\Gamma_{3},r}} \\
H^*((U_J/\Gamma_{3})_{(r-\ell)},k)  \ar[r]^-{F^{\ell*}}  & H^*((U_J/\Gamma_{3})_{(r)},k)
}
}\end{xy}
\end{equation}
because the maps $\eta$ are defined over $\bF_p$.  Moreover, $(F^{\ell})^*$ determines a map of 
AJ spectral sequences, so that the representative in ${}^{AJ}E_1^{*,*}((U_J/\Gamma_3)_{(r-\ell)})$ of \\
$\eta_{U_J/\Gamma_{3},r-\ell}(X^\beta(0))$ is sent to the representative 
in ${}^{AJ}E_1^{*,*}((U_J/\Gamma_{3})_{(r)})$ of \\
$\eta_{U_J/\Gamma_{3},r}(X^\beta(\ell))$.  In particular, $(x_\beta^{(\ell)})^{p^{r-1-\ell}} \in
S^*((U_J/\Gamma_{3})_{(r)})$ is a permanent cycle in the AJ spectral sequence representing
$\eta_{U/\Gamma_{3},r}(X^\beta(\ell))$ for each $\beta$  and each $0 \leq \ell < r$.
Now, using mulitipicativity, we conclude the assertions of the proposition for $z \in 
S^*(\oplus_{\ell=0}^{r-1} (\gamma_2/\gamma_{3})^{\#(r)}[2p^{r-\ell-1}])$ and thus the proposition as
stated.
\end{proof}

As a corollary of Proposition \ref{prop:permanent3-AJ}, we conclude the following description 
$gr\{ \eta_{U_J/\Gamma_3,r} \}$.  This is of some interest for the AJ-filtration is intrinsic to $U_J/\Gamma_3$.

\begin{cor}
\label{cor:AJfiltGamma3}
As for $gr\{ \eta_{U_J/\gamma_3,r} \}$ in Proposition \ref{prop:eta3}, 
$${}^{AJ}gr\{ \eta_{U_J/\gamma_3,r} \} = f \odot g:  S^*((U_J/\Gamma_3)_{(r)}) \ \to \ {}^{AJ}gr\{ H^*((U_J/\Gamma_3)_{(r)},k)\}.$$
\end{cor}

\vskip .2in


\section{The map  $ \ol \eta_{U_3,r}: \ol S^*((U_3)_{(r)}) \to H^\bu((U_3)_{(r)},k)$}
\label{sec:U3}

We apply the results of the previous section to the Heisenberg group $U_3$ and its Frobenius kernels $(U_3)_{(r)}$.

\begin{ex}
\label{ex:Sbar-U3}
$\ol S^*((U_3)_{(r)})$ is generated by
elements $X^{1,3}(\ell) =  (x_{1,3}^{(\ell)})^{p^r-\ell-1}\in (\gamma_2)^{\#(r)}[2p^{r-\ell-1}]$ 
and $(X^{1,2}(\ell))^{p^{-r+\ell+1}} =  x_{1,2}^{(\ell)}, \ (X^{2,3}(\ell))^{p^{-r+\ell+1}} =
x_{2,3}^{(\ell)}\in (\fu_3/\gamma_2)^{\#(\ell+1)}[2]$
with $0 \leq \ell < r$.  A set of relations for $\ol S^*((U_3)_{(r)})$  is  given by the
special case of (\ref{reln:S2}):
\begin{equation}
\label{reln:S2-U3}
(x_{1,2}^{(\ell)})^{p^{j+1}} \otimes 
x_{2,3}^{(\ell+1+j)} - (x_{2,3}^{(\ell)})^{p^{j+1}} \otimes x_{1,3}^{(\ell+1+j)}
\ = \ 0 \ \in H^{2p^{j+1}+2}((U_3)_{(r)},k)
\end{equation}
for each $\ell, j \geq 0$ such that $\ell+ 1 +j < r$.

Generators and relations for  $Q((U_3/\Gamma_2)_{(r)})$ are obtained from those for $\ol S^*((U_3)_{(r)})$ 
by setting each $X^{1,3}(\ell)$ equal to 0.
\end{ex}

Composition with the quotient map $U_J/\Gamma_3 \to U_J/\Gamma_2$ determines $V_r(U_J/\Gamma_3) \to V_r(U_J/\Gamma_2)$
and thus the map of $k$-algebras $k[V_r(U_J/\Gamma_2)] \to k[V_r(U_J/\Gamma_3)]$.  We denote the image of this map
by $\ol{k[V_r(U_J/\Gamma_3)]} \subset k[V_r(U_J/\Gamma_3)]$. 

In the proof of the following proposition we use the description of $k[V_r(U_J/\Gamma_3)]$ in terms of generators and relations 
which follows immediately from Proposition \ref{prop:def-relns} and the explicit description of $k[V_r(GL_N)]$ given in
Theorem \ref{thm:SFB-factor}.  Namely, $k[V_r(U_J/\Gamma_3)]$ is generated by $X^{i,j}(\ell)$ where $1 \leq i \leq j \leq 3$
and $0 \leq \ell < r$; a set of relations is given by 
\begin{equation}
\label{reln:VrUGamma3}
X^{1,2}(\ell)\cdot X^{2,3}(\ell^\prime) - X^{2,3}(\ell)\cdot X^{1,2}(\ell^\prime), \quad 0 \leq \ell < \ell^\prime.
\end{equation}

\begin{prop}
\label{prop:ratU3}
The  coordinate algebra $k[V_r(U_J/\Gamma_3)]$  admits a natural tensor product 
decomposition as $k$-algebras, 
$$k[V_r(U_J/\Gamma_3)] \ = \  \ol{k[V_r(U_3/\Gamma_2)]} \otimes S^*(\oplus_{\ell=0}^{r-1} (\gamma_2/\gamma_3)^{\#(r)}[2p^{r-\ell-1}]).$$

In the special case $U_J = U_3$,  $\ol {k[V_r(U_3/\Gamma_2)]}$ is an integral domain smooth outside of the origin
with field of fractions a purely transcendental extension of  transcendence degree $r+1$.

Consequently, $k[V_r(U_3)]$ is an  integrally closed domain of dimension $2r+1$.
\end{prop}

\begin{proof}
We identify $k[V_r(U_J/\Gamma_2)]$ with 
\begin{equation}
\label{identify:UJGamma2}
S^*(\oplus_{\ell=0}^{r-1} (\fu_J/\gamma_2)^{\#(r)}[2p^{r-\ell-1}]) \ \simeq \ k[X^{1,2}(\ell),X^{2,3}(\ell^\prime);\ 0 \leq \ell, \ell^\prime < r]
\end{equation}
and $\ol{k[V_r(U_3/\Gamma_2)]}$ with the quotient of $k[V_r(U_J/\Gamma_2)]$  by the relations (\ref{reln:VrUGamma3}).
The tensor product decomposition is immediate from the observation that these relations 
do not involve elements of $S^*(\oplus_{\ell=0}^{r-1} (\gamma_2/\gamma_3)^{\#(r)}[2p^{r-\ell-1}]).$

If $r=1$, then $V_r(U_3) = \fu_3$ so that $\ol{k[V_1(U_3/\Gamma_2)]} \ \simeq \ k[X^{1,3}(0),X^{2,3}(0)]$.   
For the remainder of the proof, we assume $r > 1$.

For any $\ell_1, 0 \leq \ell_1 < r$, the algebra $\ol{k[V_r(U_3/\Gamma_2)]}[(X^{2,3}(\ell_1))^{-1}]$ is isomorphic to 
$$k[X^{2,3}(\ell), 0 \leq \ell < r; X^{1,2}(\ell_1)][(X^{2,3}(\ell_1))^{-1}],$$
since $X^{1,2}(\ell) = X^{1,2}(\ell_1)(X^{2,3}(\ell_1))^{-1}X^{2,3}(\ell)$;
similarly, for any $\ell_0, 0 \leq \ell_0 < r$,  the algebra $\ol{k[V_r(U_3/\Gamma_2)]}[(X^{1,2}(\ell_0))^{-1}]$ is isomorphic to 
$$k[X^{1,2}(\ell), 0 \leq \ell < r; X^{2,3}(\ell_0)][(X^{1,2}(\ell_0))^{-1}].$$
This verifies the computation of the field of fractions of $\ol{k[V_r(U_3/\Gamma_2)]}$ and shows that $\ol{k[V_r(U_3/\Gamma_2)]}$
is smooth outside the common zeros of $\{ X^{1,2}(\ell_0), X^{2,3}(\ell_1); 0 \leq \ell_0,\ell_1 < r \}$;
namely, the origin.
A theorem of Serre (see \cite[Thm 39]{M}) tells us that  $\ol{k[V_r(U_3/\Gamma_2)]}$ is an integrally 
closed domain since the codimension of this zero locus is at least 2.
\end{proof}

\begin{prop}
\label{prop:VtoSbar}
There is a naturally constructed injective map   
\begin{equation}
\label{map:theta}\
\theta_{U_J/\Gamma_3,r}: k[V_r(U_J/\Gamma_3)] \ \to \ \ol S^*((U_J/\Gamma_3)_{(r)})
\end{equation} 
such that $\eta_{U_J/\Gamma_3,r}: S^*((U_J/\Gamma_3)_{(r)} \to H^\bu((U_J/\Gamma_3)_{(r)},k)$
as defined in Definition \ref{defn:eta}  factors through $ \ol \eta_{U_J/\Gamma_3,r} \circ \theta_{U_J/\Gamma_3,r}:
k[V_r(U_J/\Gamma_3)]  \to H^\bu((U_J/\Gamma_3)_{(r)},k),$
where $\ol \eta_{U_J/\Gamma_3,r}$ is given in Proposition \ref{prop:oleta3}.

In the special case $U_J = U_3$, 
$$\ol \eta_{U_3,r} \circ \theta_{U_3,r} = \ol \phi_{U_3,r}: k[V_r(U_3)] \to H^\bu((U_3)_{(r)},k),$$
where $\ol \phi_{U_3,r}$ is given in Proposition \ref{prop:coh-UN}.
\end{prop}

\begin{proof}
Consider the defining quotient map $q_{U_J/\Gamma3,r}: S^*(\oplus_{\ell = 0}^{r-1} (\fu_J/\gamma_3)^{\#(r)}[2p^{r-\ell-1}])
\to k[V_r(U_J/\Gamma_3)]$.   We see by inspection that the kernel of $q_{U_J/\Gamma3,r}$
is generated by the intersection of $J_2 \subset 
S^*(\oplus_{\ell = 0}^{r-1} (\fu_J/\gamma_2)^{\#(r)}[2p^{r-\ell-1}])$ with $S^*(\oplus_{\ell = 0}^{r-1} (\fu_J/\gamma_2)^{\#(r)}[2p^{r-\ell-1}])$.
This determines $\theta_{U_J/\Gamma_3,r}$ fitting in the commutative square 
\begin{equation}
\label{commute:theta}
\begin{xy}*!C\xybox{%
\xymatrix{
S^*(\oplus_{\ell = 0}^{r-1} (\fu_J/\gamma_3)^{\#(r)}[2p^{r-\ell-1}]) \ar[r]^-{q_{U_J/\Gamma_3,r}} \ar[d] &  k[V_r(U_J/\Gamma_3)]  
\ar[d]^{\theta_{U_J/\Gamma_3,r}} \\
S^*((U_J/\Gamma_3)_{(r)}) \ar[r] & \ol S^*((U_J/\Gamma_3)_{(r)}). \\
 }
}\end{xy}
\end{equation}

The fact that the kernel of $q_{U_J/\Gamma_3,r}$ is the intesection of $I_3$ with 
$S^*(\oplus_{\ell = 0}^{r-1} (\fu_J/\gamma_3)^{\#(r)}[2p^{r-\ell-1}])$ (also generated by the intersection of $J_2 \subset 
S^*(\oplus_{\ell = 0}^{r-1} (\fu_J/\gamma_2)^{\#(r)}[2p^{r-\ell-1}])$)
implies the injectivity of $\theta_{U_J/\Gamma_3,r}$.  

The equality $\eta_{U_3,r} = \phi_{U_3,r}$ of Proposition \ref{prop:1-dim} together with
 the surjectivity of $q_{U_J/\Gamma_3,r}$ implies the 
equality $\ol \eta_{U_3,r} \circ \theta_{U_3,r} \ = \ \ol \phi_{U_3,r}$.  
\end{proof}

We next observe that the tensor product decomposition of Proposition \ref{prop:ratU3} is 
respected by the map $\theta_{U_J/\Gamma_3}$.  This enables us to show in the following proposition 
that $\theta_{U_3,r}: k[V_r(U_3)] \to \ol S^*((U_3)_{(r)})$ is a finite map of integral domains.

\begin{prop}
\label{prop:theta-tensor}
The map $\theta_{U_3,r}$  of Proposition \ref{prop:VtoSbar} can be written as the tensor product
$$\ol \theta_{U_3,r} \otimes 1:  \ol{k[V_r(U_J/\Gamma_3)]}  \otimes S^*((\gamma_2/\gamma_3)^{\#(r)}[2p^{r-\ell-1}]) \ \to $$
$$ \to \ Q^*((U_3/\Gamma_2)_{(r)}) \otimes S^*((\gamma_2)^{\#(r)}[2p^{r-\ell-1}]) \ = \  \ol S^*((U_3)_{(r)}).$$

The map $\ol \theta_{U_3,r}: \ol{k[V_r(U_3)]}   \to Q^*((U_3/\Gamma_2)_{(r)})$ is a finite map of integral domains
 of degree $p^{\frac{(r+2)(r-1)}{2}}$ obtained by taking $p^{r-\ell-1}$-st roots of $X^{1,2}(\ell), X^{2,3}(\ell)$ 
 for each $\ell, 0 \leq \ell < r$.   
 Thus, $\theta_{U_3,r}: k[V_r(U_3)] \to \ol S^*((U_3)_{(r)})$ is a finite map of integral domains.
 \end{prop}

\begin{proof}
The fact that $\theta_{U_J/\Gamma_3,r}$
is a tensor product of the form $\ol \theta_{U_3,r} \otimes 1$ arises from the fact that the tensor decomposition
of $k[V_r(U_3)]$ in Proposition \ref{prop:ratU3} and that of $\ol S^*((U_3)_{(r)})$ in (\ref{Sbar-split}) both arise 
because the relations do not involve weights of level 2.   The fact that $\theta_{U_J/\Gamma_3,r}$ is essentially
the identity on $S^*((\gamma_2/\gamma_3)^{\#(r)}[2p^{r-\ell-1})$ can be traced back to the definition of 
$X^{i,j}(\ell) \in \gl_N^{\#(r)}[2p^{r-\ell-1}]$ prior to Theorem \ref{thm:SFB-factor}.

The $k$-algebra $k[V_r(U_3)]$ is an integral domain by Proposition \ref{prop:ratU3}.
Arguing as in the proof of Proposition \ref{prop:ratU3}, we verify that $Q^*((U_3/\Gamma_2)_{(r)})[(x_\alpha^{(0)})^{-1}]$
is the localization of the  polynomial algebra on generators $x_{\alpha^\prime}^{(\ell)}, 0 \leq \ell < r; Y^{1,2}(0)$ with
$x_\alpha^{(0)}$ inverted.  Thus, to show that $Q^*((U_3/\Gamma_2)_{(r)})$ is a domain it suffices 
to show that the localization map
 $Q^*((U_3/\Gamma_2)_{(r)}) \to Q^*((U_3/\Gamma_2)_{(r)})[(x_{\alpha}^{(0})^{-1}]$
is injective.  This is verified by examining the relations (\ref{reln:S2}) to show that $x_\alpha^{(0)} 
\in Q^*((U_3/\Gamma_2)_{(r)})$ is not a zero-divisor.

Because $k[V_r(U_3)]$ is a domain, $F^r = \psi_{U_3,r} \circ \ol \eta_{U,3} \circ \theta_{U_3,r}: k[V_r(U_3)] \ \to \ k[V_r(U_3)]$
is injective and thus $\theta_{U_3,r}$ is injective.   Since $S^*(\oplus_{\ell=0}^{r-1} (\fu_3)^{\#(r)}[2p^{r-\ell-1}]) \to 
S^*((U_3)_{(r)})$ is obtaining by taking $p^{r-\ell-1}$-st roots of 
$(x_\alpha^{(\ell)})^{p^{r-\ell-1}}, (x_{\alpha^\prime}^{(\ell)})^{p^{r-\ell-1}} $ for each $\ell, 0 \leq \ell < r$,
we conclude that $ Q^*((U_3/\Gamma_2)_{(r)})$ is similarly obtained from $\ol{k[V_r(U_3/\Gamma_2)]}$.

To compute the degree of $\ol \theta_{U_3,r}$,
we consider the map $\ol{k[V_r(U_3/\Gamma_2)]}[(X^{1,2}(0))^{-1}] \to Q^*((U_3/\Gamma_2)_{(r)})[(x_\alpha^{(0)})^{-1}]$ and utilize
the facts that $x_{\alpha^\prime}^{(\ell)}$ is the $p^{r-\ell-1}$-th root of the image of $X^{2,3}(\ell)$ and that 
$x_\alpha^{(0)}$ is the $p^{r-1}$-st root of the image of $X^{1,2}(0)$ (using the notation of Proposition \ref{prop:ratU3}).

Finally, since $ \theta_{U_3,r} \ = \ \ol\theta_{U_3,r}\otimes1$, the fact that  $\ol \theta_{U_3,r}$ is a finite map of 
integral domains implies that $ \theta_{U_3,r}$ is also a finite map of integral domains 
\end{proof}

The following theorem summarizes what we know about $\ol \eta_{U_3,r}$.

\begin{thm}
\label{thm:U3r}
Retain the hypotheses and notation of Proposition \ref{prop:ABS}.
\begin{enumerate}
\item
$\ol \eta_{U_3,r} \ = \ \phi_{U_3,r}.$
\item
$\ol \eta_{U_3,r}: \ol S^*((U_3)_{(r)}) \ \to \ H^\bu((U_3)_{(r)},k)$ is injective.  
\item
$\ol \eta_{U_3,r}$ is surjective onto $p^r$-th powers of elements of $H^\bu((U_3)_{(r)},k)$.
\item
$gr(\ol \eta_{U_3,r})$ factors through
$$f_Q \otimes g: \ol S^*((U_3)_{(r)})  \simeq Q^*((U_3/\Gamma_2)_{(r)}) \otimes 
S^*(\oplus_{\ell = 0}^{r-1} (\gamma_2/\gamma_3)^{\#(r)}[2p^{r-\ell-1}])
\ \to \ E_\infty^{*,0} \otimes E_\infty^{0,*}.$$ 
\item
$gr(\ol \eta_{U_3,r}): \ol S^*((U_3)_{(r)}) \to \ gr\{H^\bu((U_3)_{(r)},k)$ is injective.
\item
$gr(\ol \eta_{U_J/\Gamma_3,r})$ is surjective onto $p$-th powers of elements of $gr\{H^\bu((U_3)_{(r)},k)$.
\end{enumerate}
\end{thm}

\begin{proof}
The equality $\ol \eta_{r} \ = \ (\phi_{U_3,r})_|$ of  (\ref{diagram:compat-eta}) implies that
$ \ol \eta_{U_3,r} \ = \ \phi_{U_3,r}$, since  $\eta_{U_3,r}, \ \phi_{U_3,r}$ are determined by
the Frobenius twists of the basic maps $\ol \eta_{r},\ (\phi_{U_3,r})_|$.

By Proposition \ref{prop:coh-UN}, $F^r = \psi_{U_3,r} \circ \ol \phi_{U_3,r}: k[V_r(U_3)] \to k[V_r(U_3)]$.
Since $k[V_r(U_3)]$ is a domain by Proposition \ref{prop:ratU3}, $F^r$ and thus also $\ol \phi_{U_3,r}$ are injective.  
Since the $p^{r-1}$-st power of each element in $S^*((U_3)_{(r)})$ lies in the 
image of $S^*(\oplus_{\ell = 0}^{r-1} (\fu_3)^{\#(r)}[2p^{r-\ell-1}])$, the commutativity of (\ref{commute:theta})
and the surjectivity of $S^*((U_J/\Gamma_3)_{(r)})  \to \ol S^*((U_J/\Gamma_3)_{(r)})$
imply that the $p^{r-1}$-st power of an element in $\ol S^*((U_3)_{(r)})$ lies in  the image of $\theta_{U_3,r}$. 
Thus, any element in the kernel of $\ol \eta_{U_3,r}$ must have $p^{r-1}$-st power which is in the kernel
of $\ol \phi_{U_3,r}$ which we have observed is trivial.  Since $\ol S^*((U_3)_{(r)})$ is a domain by 
Proposition \ref{prop:theta-tensor}, we conclude that $\ol \eta_{U_3,r}$ must be injective.

The surjectivity statement of (3) follows from the fact that the composition $F^r = \psi_{U_3,r} \circ \ol \phi_{U_3,r}$ is surjective
onto $p^r$-th powers.

The factorization of $f\odot g$ through $f\otimes g: S^*((U_3)_{(r)})  \otimes 
S^*(\oplus_{\ell = 0}^{r-1} (\gamma_2/\gamma_3)^{\#(r)}[2p^{r-\ell-1}]) \\
\ \to \ E_\infty^{*,0} \otimes E_\infty^{0,*}$ is given by the proof of  Proposition \ref{prop:eta3}.  This is 
easily seen to determine the factorization of $gr(\ol \eta_{U_3,r})$ as asserted in (4).

Granted the injectivity (1), to prove the injectivity of $gr(\ol \eta_{U_3,r})$ 
as asserted in (5) it suffices to show that both $\tilde f_Q$ and $\tilde g$ are
filtration level preserving.   Since the filtration level on $Q((U_J/\Gamma_2)_{(r)}$ equals the cohomological degree
and $\tilde f_Q$ can only increase filtration level (since it is a map of filtered algebras), $\tilde f_Q$ preserves 
filtration level.  Since the composition of $\tilde g: S^*(\oplus_{\ell=0}^{r-1}(\gamma_2/\gamma_3)^{\#(r)}[2p^{r-\ell-1}]) \to 
H^\bu((U_3)_{(r)},k)$ with the natural map $E^{0,*}_\infty$ is injective, we conclude that $\tilde g$ sends every
non-zero element to a cohomology class of filtration level 0 and therefore must also preserve filtration level.

Finally, to prove that $gr(\ol \eta_{U_3,r})$ is surjective onto $p$-th powers, it suffices to prove that the $p$-th power
of any permanent cycle, $z \in Z_2^{*,*} \subset E_2^{*,*}$, lies in the image of $S^*((U_3)_{(r)})$.  Write $z = x +y$
with $x$ defined as the sum of those summands of $z$ lying in $S^*(\oplus_{\ell=0}^{r-1}(\fu_3)^{\#(\ell+1)}[2])$ and
$y$ as the sum of the remaining summands of $z$, where $z$ is written as a sum of terms using the decomposition
(\ref{UGamma3-tensor}).  Then $y^p = 0 \in E_2^{*,*}$, so that $z^p = x^p \in
Z_2^{*,*} \cap S^*(\oplus_{\ell=0}^{r-1}(\fu_3)^{\#(\ell+1)}[2])$.  By Corollary 2.4, either $z^p$ is a boundary or 
$z^p \in S^*((U_3)_{(r)})$.  This completes the proof of (6).
\end{proof}

Observe that the (adjoint) action of $T$ on $S^*((U_J)_{(r)})$ induces an action on $\ol S^*((U_J)_{(r)}) = S^*((U_J)_{(r)}) /I_3$
which restricts to an action on the tensor factor $Q((U_J/\Gamma_2)_{(r)})$ of $\ol S^*((U_J)_{(r)})$ because 
the relations (\ref{eqn:relnU3}) are generated by $T$-eigenvalues.  The following
proposition is a consequence of the description of $\theta_{U_3,r}$ given in Proposition \ref{prop:theta-tensor}.

\begin{prop}
\label{T3-invar}
With respect to the above action, we have the equality
$$H^0(T_{3(r)},Q((U_3/\Gamma_2)_{(r)}) \ = \ \ol{k[V_r(U_3)]}.$$
Consequently, the restriction of $\ol \eta_{U_3,r}: \ol S^*((U_3)_{(r)}) \to H^\bu((U_3)_{(r)},k)$ to 
$(T_3)_{(r)}$-invariants has the form 
\begin{equation}
\label{eqn:U3iso}
 k[V_r(U_3)] \ \to \ H^\bu((B_3)_{(r)},k).
\end{equation}
\end{prop}

\begin{proof}
By definition of $X^{i,j}(\ell)$ as an element of $\gl_N^{\#(r)}[2p^{r-\ell-1}]$ (see Theorem \ref{thm:SFB-factor}),
we see that the images in  $\ol{k[V_r(U_3)]} \subset Q((U_3/\Gamma_2)_{(r)}$ of $X^{i,j}(\ell)$ for $1 \leq i <  j \leq 3$  
are $T_{3(r)}$ invariant.  On the other hand, using Proposition \ref{prop:theta-tensor} we write $Q((U_3/\Gamma_2)_{(r)}$
as a free module over $\ol{k[V_r(U_3)]}$ generated by powers of the $p^{r-\ell -1}$-st roots of the images of
$X^{i,j}(\ell)$ viewed as images of elements of $S^*(\oplus_{\ell=0}^{r-1} \fu^{\#(\ell+1)}[2])$.    If a power of such
a root is not divisible by $p^{r-\ell -1}$, then it is a non-trivial eigenvector for the semi-simple action of $T_{3(r)}$.
This implies the asserted equality.

Since $\ol S^*((U_3)_{(r)}) \simeq Q((U_3/\Gamma_2)_{(r)}) \otimes 
S^*(\oplus_{\ell=0}^{r-1}(\gamma_2/\gamma_3)^{\#(r)}[2p^{r-\ell-1}])$, we conclude that
$H^0(T_{3(r)},\ol S^*((U_3)_{(r)}))$ equals $\ol{k[V_r(U_3)]} \otimes S^*(\oplus_{\ell=0}^{r-1}(\gamma_2/\gamma_3)^{\#(r)}[2p^{r-\ell-1}])$
which equals $k[V_r(U_3)]$ by Proposition \ref{prop:ratU3}.  The spectral sequence for the extension 
$$1 \to (U_3)_{((r)} \ \to \ (B_3)_{(r)} \ \to \ (T_3)_{(r)}  \to 1$$
and the semi-simplicity of $(T_3)_{(r)}$ imply the equality 
$$H^0((T_3)_{(r)},H^*((U_3)_{(r)},k)) \ = \ H^*((B_3)_{(r)},k).$$
This implies that $H^0(T_{(r)},-)$ applied to $\ol \eta_{U_3,r}$ yields $ k[V_r(U_3)] \ \to \ H^\bu((B_3)_{(r)},k)$.
\end{proof}

\vskip .2in


\section{Questions}
\label{sec:questions}

Here are a few of the many questions encountered, but not answered, in this paper.

\begin{question}
\label{ques:unknown}  
 For $V$ as in Theorem \ref{thm:inj}, does there exist a non-nilpotent cohomology class
$\alpha \in H^*(V,k)$ each of whose restrictions $i_r^*(\alpha) \in H^*(V_{(r)},k)$ is nilpotent?
\end{question}

\vskip.1in
\begin{question}
Under what conditions on the unipotent group $V$ is the image of the restriction map   $H^*(V,k) \to H^*(V_{(r)},k)$ 
finitely generated?
\end{question}

\begin{question}
Are there natural Steenrod operations on the Andersen-Jantzen spectral sequence which satisfy the usual relationship
with respect to differentials including the Kudo transgression theorem?
\end{question}

\begin{question}
Can one compare the LHS-filtration and the AJ-filtration on the Hochschild complex $C^*((U_J/\Gamma_3)_{(r)})$?
\end{question} 

\begin{question}
Is the map $\ol \phi_{U_3,r}: k[V_r(U_3)] \to (H^*((U_3)_{(r)},k))_{red}$ an isomorphism?
\end{question}

\begin{question}
Under what conditions on the unipotent algebraic group $U$ is the $k$-algebra $k[V_r(U)]$
reduced (i.e., has no non-trivial nilpotent elements) for all $r > 0$?
\end{question}

\begin{question}
Can we establish the unipotent analogue of the ``matrix $p$-th power relation"
$$ S_{i,j,\ell} \quad = \quad \sum_{t_1,\ldots ,t_{p-1}} X^{i,t_1}(\ell) \cdot X^{t_1,t_2}(\ell) \cdots X^{t_{p-1},j}(\ell)$$
using the AJ spectral sequence.
\end{question}



\begin{thebibliography}{20}

\bibitem{AJ} H. Andersen, J. Jantzen, {\em Cohomology of induced representations for algebraic groups}, 
Math. Ann. {\bf 269} (1985), 487-525.

\bibitem{AE} J. Alperin, L. Evens, {\em Representations, resolutions, and Quillen's dimension theorem}, 
 J. Pure and Appl. Algebra {\bf 22} (1981), 1 - 9.


\bibitem{ABS} H. Azad, M. Barry, G Seitz, {\em On the structure of parabolic subgroups}, Comm. Algebra {\bf 18}
(1990), 551-562.


\bibitem{Ca} J. Carlson, {\em The varieties and the cohomology ring of a module}, J. Algebra {\bf 85} (1983), 101-143.


\bibitem{CN} J. Carlson, D. Nakano, {On the structure of cohomology rings of $p$-nilpotent Lie algebras},
Transform. Groups {\bf 19} (2104), 721-734.

%
%


\bibitem{CPS}  E. Cline, B. Parshall, L. Scott, {Cohomology, hyperalgebras, and representations}, J. Alg. {\bf 63} (1980),
98 - 123.


\bibitem{CPSvdK}  E. Cline, B. Parshall, L. Scott, W. van der Kallen, {Rational and generic cohomology}, 
Invent. Math {\bf 39} (1977), 143-163.


%

\bibitem{DNN}  C. Drupieski, D. Nakano,  N. Ngo, {\em Cohomology for infinitesimal unipotent groups and 
quantum groups}, Transform. Groups {\bf 17}(2012), 393-416.
%
%

\bibitem{F1} E. Friedlander, {\em Support varieties for rational representations}, Compositio Math {\bf 151} (2015),
765-792.




%
%



\bibitem{FPar1} E. Friedlander, B. Barshall, {On the cohomology of algebraic and related finite groups}, Invent. Math {\bf 74}
(1983), 85-117.

\bibitem{FPar2} E. Friedlander, B. Barshall, {Cohomology of infinitesimal and discrete groups}, Math. Ann {\bf 273}
(1986), 353-374.

\bibitem{FP1} E. Friedlander, J. Pevtsova, {\em $\Pi$-supports for modules 
for finite group schemes},  Duke. Math. J. {\bf 139} (2007), 317--368.

%
%

\bibitem{FS} E. Friedlander, A. Suslin, {Cohomology of finite group schemes over a field}, Invent. Math. 
{\bf 127}(1997), 209-270.

\bibitem{Garib} S. Garibaldi, {\em Vanishing of trace forms in low characteristic}, Algebra and Number Theory
{\bf 3} (2009), 543-566.


%

\bibitem{HS} G. Hochschild, J.-P. Serre, {\em Cohomology of group extensions}, Trans. A.M.S. {\bf 74} (1953), 110-134.

\bibitem{J} J.C. Jantzen, {\em Representations of Algebraic groups. Second edition}, Math Surveys and Monographs
{\bf 107},  AMS (2003).



\bibitem{Kudo} T. Kudo, {\em A transgression theorem}, Mem. Fac. Sci. Kyusyu Univ {\bf 9} (1956), 79-81.



\bibitem{M} H. Matsumura, {Commutative Algebra}, Benjamin, N.Y (1970).



%
%
%

%
\bibitem{Q1} D. Quillen, {\em The spectrum of an equivariant cohomology ring: I}, Ann. of Math {\bf 94}(1971), 549-572.

\bibitem{Q2} D. Quillen, {\em The spectrum of an equivariant cohomology ring: II}, Ann. of Math {\bf 94}(1971), 573-602.

\bibitem{Q3} D. Quillen, {\em On the cohomology and K-theory of the general linear groups over a finite field},
Ann. of Math {\bf 96}(1972), 552-586.


%




%

\bibitem{S3} P. Sobaje, {\em Exponentiation of commuting nilpotent varieties}, J. Pure and Appl. Algebra {\bf 219},
(2015), 2206 - 2217.


\bibitem{SFB1} A. Suslin, E. Friedlander, C. Bendel,
{\em Infinitesimal 1-parameter subgroups and cohomology},
J. Amer. Math. Soc. {\bf 10} (1997), 693-728.

\bibitem{SFB2} A. Suslin, E. Friedlander, C. Bendel {\em Support
varieties for infinitesimal group schemes},  J. Amer. Math. Soc.
{10} (1997), 729-759.
%


\end{thebibliography}
\end{document}